
\documentclass{tran-l}

\vfuzz2pt 

\newtheorem{thm}{Theorem}[subsection]

\newtheorem{lem}[thm]{Lemma}
\newtheorem{prop}[thm]{Proposition}
\theoremstyle{definition}
\newtheorem{defn}[thm]{Definition}
\theoremstyle{remark}
\newtheorem{rem}[thm]{Remark}
\newtheorem{exam}[thm]{Example}
\numberwithin{equation}{subsection}

\newcommand{\h}{\mathcal{H}}

\newcommand{\A}{\mathcal{A}}
\newcommand{\J}{\mathcal{J}}

\newcommand{\Field}{\mathbb{F}}

\newcommand{\Lom}{\mathcal{L}}

\usepackage{graphicx}
\usepackage{setspace}

\begin{document}

\title[CONSTRUCTION OF SOME NON-ASSOCIATIVE ALGEBRAS]
 {Construction of some non-associative algebras from Associative Algebras with an endomorphism
operator, a differential operator or a left averaging operator.}

\author{Wilson Arley Martinez, Samin Ingrith Ceron}

\address{Martinez, W.A.; Departmento de Matem\'aticas, Universidad del Cauca, Popay\'an, Colombia}
\email{wamartinez@unicauca.edu.co}

\address{Ceron, S.I.; Departmento de Matem\'aticas, Universidad del Cauca, Popay\'an, Colombia}
\email{sicbravo@gmail.com}

\thanks{This work was completed with the support of the Universidad del Cauca.}

\thanks{The authors was also supported by the research group “Estructuras Algebraicas, Divulgación Matemática y Teorías Asociadas. @DiTa”.}


\subjclass{17A15;17A32;17A20;17B40;47C05.}

\keywords{Associative Algebras, Lie algebra, Pre-Lie algebra, Jordan algebra, Flexible Algebra, (left) Leibniz
algebra, Rota-Baxter Operator, Endomorphism operator, Diﬀerential operator.}

\date{January 27, 2023 and, in revised form, January, 2023.}

\dedicatory{}

\commby{W.A.M}


\begin{abstract}
In this paper, we introduce the concepts of endomorphism operator, left averaging operator, differential operator and Rota-Baxter Operator, and we construct examples of these linear maps on associative algebras with a left identity, a skew-idempotent or an idempotent element. These maps on associative algebra induce a non-associative algebra structure such as Lie algebra, Pre-Lie algebra, Jordan algebra, Flexible Algebra or (left) Leibniz algebra. We consider the construction of non-associative algebras from associative algebras with Linear Operators as the main results of this work. In this paper we give examples of non-associative algebras on subspaces of square matrices $ M_{3}(\mathbb{R})$.
\end{abstract}

\maketitle

\section*{Introduction}

Linear operators can be defined on different algebraic structures, the well-known operators are the endomorphism operator and yhe differential operator ~\cite{Kolchin1985, van2003, Li2013}. By the 1970's, new identities for operators emerged from studies in combinatorics, probability and analysis. Gian-Carlo Rota was most interested in the following operators:

\begin{center}
\begin{tabular}{rl}
{\it Endomorphism operator}    & $R(x\cdot y) = R(x)\cdot R(y),$\vspace{0.3cm}\\
{\it Differential operator}    & $R(x\cdot y) = R(x)\cdot y + x\cdot R(y),$\vspace{0.3cm} \\
{\it Rota-Baxter operator of weight $\lambda$,}  &                                              \vspace{0cm}\\
{\it where $\lambda$ is a fixed constant}       & $R(x)\cdot R(y) = R(x\cdot R(y) + R(x)\cdot y + \lambda x\cdot y),$ \vspace{0.3cm}\\
{\it Average operator}          &  $R(x)\cdot R(y) = R(x\cdot R(y)),$\vspace{0.3cm} \\
{\it Inverse average operator}  & $R(x)\cdot R(y) = R(R(x)\cdot y),$\vspace{0.3cm} \\
{\it Reynolds operator}         & $R(x)\cdot R(y) = R(x\cdot R(y) + R(x)\cdot y - R(x)\cdot R(y)).$
\end{tabular}
\end{center}

\medskip

An endomorphism is a homomorphism from an algebraic structure into itself. Let $A$ be a non-unital, associative algebra, $\alpha$ an algebra endomorphism on $A$ and define $\ast: A\times A \longrightarrow A$ by $a\ast b = \alpha(a·b)$ for all $a,b \in A$. Then $(A,\ast,\alpha)$ is a hom-associative algebra, see ~\cite{Yau2009}.

\medskip

The study of averaging operators from an algebraic point of view was started by Kampé de Fériet ~\cite{KAMPE1949} and continued
and elaborated by Birkhoff ~\cite{BIRKHOFF1949}. Averaging operators have connection with developments in the theory of turbulence ~\cite{BIRKHOFF1949,BIRKHOFF1960}, and are closely related to the probability theory.

\medskip

In his Ph. D. thesis in 2000 ~\cite{Weili2014}, Weili Cao studied averaging operators in the general context and the algebraic definition. He studied the naturally induced Lie algebra structures from averaging operators: Let $R : A \rightarrow A$  be an Averaging Operator on an algebra $A$, this map permits us to define a Lie bracket operation on $A$, by  $[x,y] = x\cdot R(y) - y\cdot R(x)$,  $\forall x,y \in A$, see ~\cite{Weili2014,Nguyen1976}.

\medskip

Let $R : A \rightarrow A$  be a Differential Operator on a commutative associative algebra $A$. This map induces a new Lie algebra structure called Witt type Lie algebras ~\cite{Xu2000}, defined by the bracket $[x,y] = R(x)\cdot y - x\cdot R(y)$,  $\forall x,y \in A$. Commutative associative algebras with this type of linear maps permit us to present examples of Lie algebras.

\medskip

Rota-Baxter operators were introduced by the mathematician Glenn E. Baxter ~\cite{Baxter1960}, in the study of differential equations applied to probability theory, and its importance comes mainly from by the works of G.-C. Rota in combinatorics ~\cite{Baxter1969,Rota1995,Rota1972}.

\medskip

A Rota-Baxter algebra is an associative algebra equipped with a Rota-Baxter operator. Recently, noncommutative Rota-Baxter algebras have appeared in a wide range of areas in pure mathematics, for example the works of Loday and Ronco on dendriform dialgebras and trialgebras, see ~\cite{Lo1,LodayRonco2002} and in applied mathematics too, see ~\cite{Connes2000}.

\medskip

The following result provides a way to construct a pre-Lie algebra structure from a Rota-Baxter operator relation on Lie Algebras or pre-Lie algebras. We find that if $R : A \rightarrow A$  is a Rota Baxter-Operator on an Lie Algebra $(A, [,])$, this map induces a pre-Lie algebra structure, defined by $ x\ast y =[R(x),y] $,  $\forall x,y \in A$, see ~\cite{Huihui2008}.

\medskip

In the case of the Rota-Baxter relation on pre-Lie algebras, it is known that if $(A,\cdot)$ is a pre-Lie algebra and $R$ is a Rota-Baxter operator on $A$ then $R$ is still a Rota-Baxter operator on $(A, \ast)$, and the product given by  $x\ast y = [R(x), y]$ $ = R(x) \cdot y - y \cdot R(x), x, y \in A$, defines a new pre-Lie algebra $(A, \ast)$, see ~\cite{Xiuxian2007} .

\medskip

An element $u$ is said to be skew-idempotent with respect to a product · in the algebra if $u \cdot u = -u$, and an element $u$ is a right identity if $x \cdot u = x$ for all element $x$ in the algebra. Associative algebras with a left identity, a skew-idempotent or an idempotent element permit us to build examples of linear maps such as Endomorphism Operator, Left Averaging Operator, Differential Operator and Rota-Baxter Operator. Associative algebras with this type of linear maps permit us to present constructions of non-associative algebras.

\section{Construction of Lie algebras from Associative Algebras with an Endomorphism Operator}

In this section, we present in Proposition ~\ref{p:LieAlge} a Lie algebra structure given by the following bracket $[x,y]=x\cdot R(y)-y\cdot R(x)$ where $R$ is an endomorphism operator, and $R$ is defined from a {\it right identity on a subalgebra $\A$}:  Let $(\A, \, \cdot )$ be an algebra, then an element $u$ of $\h$, $\A\subseteq \h$,  is called a {\it right identity on $\A$} if $x\cdot u  = x$ for all $x$ in $\A$.

\subsection{Introduction}
In Definition \ref{d:identity} we define a left and a right identity for an associative algebra $\A$  and in the Proposition \ref{p:Endo} we give a proof of the construction of an endomorphism operator with certain properties on $\A$ using a right identity, inspired by the well known result of  X. Xu, \cite{Xu2000}, where he proved the existence of Lie Algebras from an associative algebra $\A$ with an Differential Operator on the space $\A$. We establish a new connection between an endomorphism operator with a construction of Lie algebra structures on $\A$ . We start by briefly introducing the definition of a Lie Algebra, a left and a right identity for $\A$.

\begin{defn}
A Lie algebra over a field $\Field$ is a vector space $\mathfrak{g}$ over $\Field$ equipped with a bilinear operation $[,]:\mathfrak{g}\times \mathfrak{g}\rightarrow \mathfrak{g}$, called the (Lie) bracket which satisfies the following identities:
\begin{eqnarray}
  [x,y] &=& -[y,x] \hspace{0.5cm} (\text{Antisymmetry}),  \\
  \left[x,[y,z]\right]+\left[z,[x,y]\right]+\left[y,[z,x]\right] &=& 0 \hspace{1.3cm} (\text{Jacobi identity}).
\end{eqnarray}
\end{defn}

\begin{rem}
It is well known that any associative algebra becomes a Lie algebra with the Lie bracket given by the commutator: $[x,y]=x\cdot y-y\cdot x$. Also, that the dimension of a Lie algebra $\mathfrak{g}$ is its dimension as a vector space over $\Field$ and Ado's theorem states that every finite dimensional Lie algebra $\mathfrak{g}$ over a field $\Field$ can be viewed as a Lie algebra of square matrices with the commutator as bracket.
\end{rem}

\begin{prop} \label{p:LieAlge}
Let $\A$ be an associative algebra and let $R :\A\rightarrow  \A$ be a linear
map such that $R^{2}(x)=R(x)$  and  $R(x)\cdot R(y) = R(x\cdot y)$ for all $x, y \in \A.$
Then we can define a Lie algebra structure on $\A$ given by
\begin{equation}\label{Ec:struli}
[x,y]=x\cdot R(y)-y\cdot R(x) \hspace{0.2cm} (\text{respectively } [x,y]=R(x)\cdot y-R(y)\cdot x )
\end{equation}
\end{prop}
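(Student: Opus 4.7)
The plan is to verify directly the two defining axioms of a Lie algebra for the bracket
$[x,y] = x\cdot R(y) - y\cdot R(x)$, exploiting the two hypotheses on $R$ (namely $R^{2}=R$ and the multiplicativity $R(x)\cdot R(y) = R(x\cdot y)$) together with associativity of the underlying product. Antisymmetry is immediate from the form of the bracket: swapping $x$ and $y$ just negates the expression, so $[x,y] = -[y,x]$ with no assumption on $R$ needed.

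The real content is the Jacobi identity. The key preliminary step is to simplify $R([y,z])$: using linearity of $R$, the multiplicativity hypothesis, and then the idempotency $R^{2}=R$, I would compute
\begin{equation*}
R([y,z]) = R(y\cdot R(z)) - R(z\cdot R(y)) = R(y)\cdot R(R(z)) - R(z)\cdot R(R(y)) = R(y)\cdot R(z) - R(z)\cdot R(y).
\end{equation*}
This identity is exactly what makes the whole argument work, because it lets me rewrite each nested bracket $[x,[y,z]]$ as a combination of products of three terms each of the form $R(\,\cdot\,)\cdot R(\,\cdot\,)$ or $\,\cdot\,\cdot R(\,\cdot\,)\cdot R(\,\cdot\,)$.

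Once this substitution is done, I would expand
\begin{equation*}
[x,[y,z]] = x\cdot\bigl(R(y)\cdot R(z) - R(z)\cdot R(y)\bigr) - \bigl(y\cdot R(z) - z\cdot R(y)\bigr)\cdot R(x),
\end{equation*}
and similarly for the two cyclic permutations $[z,[x,y]]$ and $[y,[z,x]]$. Using associativity of the algebra product, each of the resulting three expressions becomes a sum of four triple products of the form $a\cdot R(b)\cdot R(c)$. A straightforward bookkeeping check shows that these twelve terms pair up and cancel in the cyclic sum, yielding $0$. This is the step where one must be careful, but it is routine: the cancellation occurs term by term without needing commutativity of $\A$.

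The same reasoning applies verbatim to the second bracket $[x,y] = R(x)\cdot y - R(y)\cdot x$, simply by exchanging the roles of the factors inside the products. Hence the main potential obstacle is purely bookkeeping in the Jacobi expansion, and there is no hidden hypothesis to track: associativity of $\A$, $R^2 = R$, and multiplicativity of $R$ are the only ingredients used.
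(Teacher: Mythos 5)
Your proposal is correct and follows essentially the same route as the paper: you reduce $R([y,z])$ to $R(y)\cdot R(z)-R(z)\cdot R(y)$ via linearity, multiplicativity and $R^{2}=R$, expand the three cyclic nested brackets, and cancel the twelve triple products pairwise using only associativity, exactly as in the paper's proof. Nothing further is needed.
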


\begin{proof}
Let $x,y,z\in \A$; then, $[x,y]=-[y,x]$  and
\begin{align*}
[x,[y,z]]& = x\cdot R([y,z])-[y,z]\cdot R(x) \\
         & = x\cdot R(y\cdot R(z)-z\cdot R(y))-( y\cdot R(z)-z\cdot R(y) )\cdot R(x)\\
         & = x\cdot (R(y)\cdot R(z)-R(z)\cdot R(y))-( y\cdot R(z))\cdot R(x) + (z\cdot R(y) )\cdot R(x)\\
         & = x\cdot (R(y)\cdot R(z))-x\cdot (R(z)\cdot R(y))-( y\cdot R(z))\cdot R(x) + (z\cdot R(y) )\cdot R(x)\\
[y,[z,x]]& = y\cdot R([z,x])-[z,x]\cdot R(y) \\
         & = y\cdot R(z\cdot R(x)-x\cdot R(z))-(z\cdot R(x)-x\cdot R(z))\cdot R(y)\\
         & = y\cdot (R(z)\cdot R(x)-R(x)\cdot R(z))-(z\cdot R(x))\cdot R(y)+(x\cdot R(z))\cdot R(y)\\
         & = y\cdot (R(z)\cdot R(x))-y\cdot (R(x)\cdot R(z))-(z\cdot R(x))\cdot R(y)+(x\cdot R(z))\cdot R(y)\\
[z,[x,y]]& = z\cdot R([x,y])-[x,y]\cdot R(z) \\
         & = z\cdot R(x\cdot R(y)-y\cdot R(x))-(x\cdot R(y)-y\cdot R(x))\cdot R(z)\\
         & = z\cdot (R(x)\cdot R(y)-R(y)\cdot R(x))-(x\cdot R(y))\cdot R(z)+(y\cdot R(x))\cdot R(z)\\
         & = z\cdot (R(x)\cdot R(y))-z\cdot (R(y)\cdot R(x))-(x\cdot R(y))\cdot R(z)+(y\cdot R(x))\cdot R(z)
\end{align*}
Thus, $[x,[y,z]]+[y,[z,x]]+[z,[x,y]] =0.$ Therefore, $(A,\,[\,, ])$ is a Lie algebra.
\end{proof}

\subsection{Examples of Lie algebras from the endomorphism operator.}

In this subsection we present the relationship between an endomorphism operator and a right identity of an associative algebra. In one direction, we show that an associative algebra $\A$ with a right identity gives an endomorphism operator with certain properties on the associative algebra $\A$. This allow us to give examples of lie algebras from the endomorphism.

\begin{defn}\label{d:identity}
Let $\A$ be an algebra and $\h$  a set containing $\A$. An element $u\in \h$ is called a \emph{left identity for $\A$} if $u\cdot x=x$ for all $x\in \A$. Similarly, $u\in \h$ is a  \emph{right identity for $\A$} if $x\cdot u=x$ for all $x\in \A$. An element $u\in \A$ which is both a left and a right identity for $\A$ is an identity element.
\end{defn}

\begin{rem}
Given an operation (function) $\cdot :\h \times \A \rightarrow \A$, an element $u$ of $\h$ is called a right identity for $\cdot$ if $x\cdot u=x$ for every element $x$ of $\A$. That is, the map $\A\rightarrow\A$ given by $x\cdot u$ is the identity function on $\A$.
\end{rem}

The following proposition and associated examples introduce the idea for all the main results of this paper.

\begin{prop} \label{p:Endo}
Let $( \, \A, \, \cdot \, )$ be an associative algebra and $\h$ a set containing $\A$, and suppose that there exists $u\in \h$ such that $u\cdot x  \in \A$ and $x\cdot u=x$ for all $x\in \A$. Then the linear map $R: \A\longrightarrow \A$ defined by $R(x)=u\cdot x$ satisfies
\begin{equation}\label{Ec:righid}
R(x)\cdot R(y)=R(x\cdot y) \text{ for all } x,y \in \A.
\end{equation}
Furthemore, $R^{2}(x)=R(x)$ if $u^{2}=u$, or $R^{2}(x)=x$ if $u^{2}=1$.
\end{prop}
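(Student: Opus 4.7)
The plan is to verify each of the three claimed identities by direct computation, relying on associativity of the product (extended to the enclosing set $\h$ so that expressions like $(u\cdot x)\cdot(u\cdot y)$ make sense and rebracket freely) together with the right-identity hypothesis $x\cdot u = x$ on $\A$. Linearity of $R$ is immediate from bilinearity of the multiplication, so there is nothing to verify on that front.

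First I would prove the endomorphism identity \eqref{Ec:righid}. Starting from $R(x)\cdot R(y) = (u\cdot x)\cdot(u\cdot y)$, I would rebracket using associativity to obtain $u\cdot\bigl(x\cdot(u\cdot y)\bigr) = u\cdot\bigl((x\cdot u)\cdot y\bigr)$, and then collapse the inner factor $x\cdot u$ to $x$ using the right-identity hypothesis. This yields $u\cdot(x\cdot y) = R(x\cdot y)$, as desired. The only subtlety is that the rebracketing takes place inside $\h$, so one must read the hypothesis as saying that the ambient product on $\h$ is associative (which is implicit in the set-up, since $\A$ is an associative subalgebra and $u\cdot x\in\A$ is meaningful).

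Next I would treat the statements about $R^{2}$. By definition, $R^{2}(x) = R(u\cdot x) = u\cdot(u\cdot x)$, which by associativity equals $(u\cdot u)\cdot x = u^{2}\cdot x$. If $u^{2} = u$, this is $u\cdot x = R(x)$, giving $R^{2} = R$; if $u^{2} = 1$ in the sense that $u^{2}$ acts as a two-sided identity on $\A$, then $u^{2}\cdot x = x$, giving $R^{2}(x) = x$.

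The main obstacle, such as it is, will be bookkeeping of where each multiplication takes place: all the computations use products of the form $u\cdot a$ or $a\cdot u$ with $a\in\A$, and the hypothesis $u\cdot x\in\A$ ensures every intermediate expression lies in $\A$ so that repeated multiplication is legitimate. Once that is observed, the proof reduces to a short chain of associativity and the right-identity cancellation $x\cdot u = x$, with no non-routine step.
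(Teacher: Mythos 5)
Your proposal is correct and follows essentially the same route as the paper's proof: rebracket $(u\cdot x)\cdot(u\cdot y)$ by associativity, collapse $x\cdot u$ to $x$, and compute $R^{2}(x)=u^{2}\cdot x$ directly. The only differences are cosmetic ones in your favor: you make explicit the (implicitly assumed) associativity of products involving $u\in\h$ and you spell out the $u^{2}=1$ case, which the paper's proof omits.
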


\begin{proof}
Let $x,y \in \A$, then $R(x\cdot y) = u\cdot (x\cdot y).$ On the other hand, we have
$R(x)\cdot R(y)=(u\cdot x)\cdot(u\cdot y)=u\cdot((x\cdot u)\cdot y)=u\cdot(x\cdot y).$
Therefore, $R(x)\cdot R(y) = R(x\cdot y)$ for all $x, y \in \A.$ Now, if $u^{2}=u$,
then $R^{2}(x)=u\cdot(u\cdot x)=(u^{2}\cdot x)=u\cdot x=R(x).$
Therefore, $R^{2}(x)=R(x)$ for all $x\in \A.$
\end{proof}

\begin{exam}
We consider the subalgebra
$$\A=\left\{\left(\begin{array}{ccc}
                   y & y & 0 \\
                   n & n & 0 \\
                   r & r & 0 \\
                 \end{array}\right): r, n , y \in \mathbb{R}  \right\}$$  under the usual matrix multiplication.

The element $u=\left(\begin{array}{ccr}
                   a    &  b   &  c \\
                   1-a  &  1-b & -c \\
                   e    &  f   &  g\\
                 \end{array}\right)$ satisfies $x\cdot u=x$ and $u\cdot x\in \A$ for all $x\in \A$.

Then the linear map $R:\A\longrightarrow \A$ defined by

\begin{align*}
R\left( \left(\begin{array}{rrr}
                   y   &  y  & 0 \\
                   n   &   n & 0 \\
                   r   &   r  & 0\\
                 \end{array}\right) \right)
                 & =\left(\begin{array}{ccr}
                   a    &  b    & c \\
                   1-a  &  1-b  & -c \\
                    e   &  f    &  g\\
                 \end{array}\right)\cdot \left(\begin{array}{rrr}
                   y    &  y  & 0 \\
                   n   &   n & 0 \\
                   r   &   r  & 0\\
                 \end{array}\right)      \\
                 & =\left(\begin{array}{ccc}
                   ay+bn+cr           &  ay+bn+cr        & 0 \\
                   (1-a)y+(1-b)n-cr   & (1-a)y+(1-b)n-cr & 0 \\
                   ey+fn+gr           &   ey+fn+gr       & 0\\
                 \end{array}\right)
\end{align*}
satisfies $R(x)\cdot R(y) = R(x\cdot y)$ for all $x, y \in \A.$
\end{exam}

\begin{exam}

We consider the subalgebra
$$A=\left\{\left(\begin{array}{ccc}
                   x & y & y \\
                   w & k & k \\
                   m & n & n \\
                 \end{array}\right): x, w , m, y, k, n \in \mathbb{R}  \right\}$$  under the usual matrix multiplication.

The element $u=\left(\begin{array}{ccr}
                   1    &  0   &  0 \\
                   0    &  0   &  1 \\
                   0    &  1   &  0\\
                 \end{array}\right)$ satisfies $u^{2}=I$, \,  $x\cdot u=x$ and $u\cdot x\in A$ for all $x\in \A$.

Then the linear map $R:\A\longrightarrow \A$ defined by

$R\left( \left(\begin{array}{rrr}
                   x    &  y  & y \\
                   w   &   k  & k \\
                   m   &   n  & n\\
                 \end{array}\right) \right)=\left(\begin{array}{ccr}
                    x   &  y    & y \\
                    m   &  n    & n \\
                    w   &  k    & k\\
                 \end{array}\right)$
\medskip

satisfies  $R^{2}(x)=x$  and  $R(x)\cdot R(y) = R(x\cdot y)$ for all $x, y \in \A.$
\end{exam}

\begin{exam}

We consider the subalgebra
$$A=\left\{\left(\begin{array}{ccc}
                   y & y & 0 \\
                   n & n & 0 \\
                   r & r & 0 \\
                 \end{array}\right): y,n,r \in \mathbb{R}  \right\}$$  under the usual matrix multiplication.

The element $u=\left(\begin{array}{ccr}
                   1    &  b   &  b \\
                   0    &  1-b & -b \\
                   0    &  b-1 &  b\\
                 \end{array}\right)$ satisfies $u^{2}=u$, \,  $x\cdot u=x$ and $u\cdot x\in \A$ for all $x\in \A$.

Then the linear map $R:\A\longrightarrow \A$ defined by

\begin{align*}
R\left( \left(\begin{array}{rrr}
                   y   &   y  & 0 \\
                   n   &   n  & 0 \\
                   r   &   r  & 0\\
                 \end{array}\right) \right)
                 &=\left(\begin{array}{ccr}
                   1    &  b    & b \\
                   0    &  1-b  & -b \\
                   0    &  b-1  &  b\\
                 \end{array}\right)\cdot \left(\begin{array}{rrr}
                   y   & y  & 0 \\
                   n   &  n & 0 \\
                   r   &  r  & 0\\
                 \end{array}\right)\\
                 &=\left(\begin{array}{ccc}
                   y+bn+br   &  y+bn+br       & 0 \\
                   n-bn-br   & n-bn-br        & 0 \\
                   nb-n+br   & nb-n+br        & 0\\
                 \end{array}\right)
\end{align*}
satisfies $R^{2}(x) =R(x)$ and $R(x)\cdot R(y) = R(x\cdot y)$ for all $x, y \in \A.$ Therefore, we can define a Lie algebra structures on $\A$ given by $[x,y]=x\cdot R(y)-y\cdot R(x)$.
\end{exam}

\section{Construction of Jordan algebras from Commutative Associative Algebras with an Endomorphism Operator}
Jordan algebras were introduced in the early 1930’s by a physicist, P. Jordan, in an attempt to generalize the formalism of quantum mechanics. Little appears to have resulted in this direction, but unanticipated relationships between these algebras and Lie groups and the foundations of geometry have been discovered.

\begin{defn}
A (non-commutative) Jordan algebra is a vector space $\J$ over a field $\Field$ of characteristic $\neq 2$  with a binary operation $\circ$ satisfying for $x,y\in \J$ the following identity:
\begin{equation}\label{Ec:Jordan}
(x\circ y)\circ x = x\circ (y\circ x)  \quad \text{ and } \quad ( (x\circ x)\circ  y ) \circ  x = (x\circ x) \circ (y\circ x).
\end{equation}
\end{defn}

\begin{rem}
Given an associative algebra $(\A,\cdot)$ we can modify the product to obtain a commutative algebra $\A^{+}$ as follows: To construct $\A^{+}$ we define $x\ast y = x\cdot y + y\cdot x$. In this new algebra the Jordan identity is satisfied:
\begin{equation}\label{e:EssIE}
((x \ast  x) \ast  y) \ast  x = (x \ast  x) \ast  (y \ast  x).
\end{equation}
\end{rem}

\begin{exam}
We consider the subalgebra
$$\A^{+}=\left\{\left(\begin{array}{ccc}
                   y & y & 0 \\
                   n & n & 0 \\
                   r & r & 0 \\
                 \end{array}\right): y,n,r \in \mathbb{R}  \right\}$$  under the product $x\ast y = x\cdot y + y\cdot x$ is a commutative algebra (non-associative),  where $\cdot$ is the usual matrix multiplication.

The element $u=\left(\begin{array}{ccr}
                   1    &  b   &  b \\
                   0    &  1-b & -b \\
                   0    &  b-1 &  b\\
                 \end{array}\right)$ satisfies $u^{2}=u$, \,  $x\cdot u=x$ and $u\cdot x\in \A^{+}$ for all $x\in \A^{+}$.

Then the linear map $R:\A^{+}\longrightarrow \A^{+}$ defined by
\begin{align*}
R\left( \left(\begin{array}{rrr}
                   y   &  y  & 0 \\
                   n   &  n  & 0 \\
                   r   &  r  & 0\\
                 \end{array}\right) \right)
                 &=\left(\begin{array}{ccr}
                   1    &  b    & b \\
                   0    &  1-b  & -b \\
                   0    &  b-1  &  b\\
                 \end{array}\right)\cdot \left(\begin{array}{rrr}
                   y    & y  & 0 \\
                   n   &  n & 0 \\
                   r   &  r  & 0\\
                 \end{array}\right)\\
                 &=\left(\begin{array}{ccc}
                   y+bn+br   &  y+bn+br       & 0 \\
                   n-bn-br   & n-bn-br        & 0 \\
                   nb-n+br   & nb-n+br        & 0\\
                 \end{array}\right)
\end{align*}
satisfies $R^{2}(x) =R(x)$ and $R(x)\ast R(y) = R(x\ast y)$ for all $x, y \in \A^{+}.$ Therefore, the Jordan identity is satisfied on $\A^{+}$, with the product given by $x\circ y=R(x)\ast R(y)$.
\end{exam}

\begin{prop}\label{p:ESSPECT}
Suppose $(\A, \cdot)$ is a Jordan algebra, and $R : \A \rightarrow  \A$ is a linear
map, such that
\begin{equation} \label{e:ESSINEQ}
  R^{2}(x)=R(x) \text{  y  } R(x)\cdot R(y) = R(x\cdot y) \text{ for all x, y } \in \A .
\end{equation}
Then we can define a new (non-commutative) Jordan algebra structure on $\A$ given by $$x\circ y=R(x)\cdot y  \hspace{0.5cm} ( \text{respectively,  } x\circ y=x\cdot R(y), \,\, x\circ y=R(x)\cdot R(y)) .$$
\end{prop}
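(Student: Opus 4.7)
The plan is to mirror the structure of the proof of Proposition \ref{p:LieAlge}: for each of the three candidate products, I would expand both Jordan axioms --- the flexible law $(x\circ y)\circ x = x\circ (y\circ x)$ and the identity $((x\circ x)\circ y)\circ x = (x\circ x)\circ (y\circ x)$ --- directly from the definition of $\circ$, then use the hypotheses $R(a\cdot b)=R(a)\cdot R(b)$ and $R^{2}=R$ to collapse every nested $R$, and finally match the remaining expression against a law already available in $(\A,\cdot)$.

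I would begin with $x\circ y = R(x)\cdot R(y)$, where the bookkeeping is cleanest. Writing $a=R(x)$ and $b=R(y)$, multiplicativity and idempotence give
\[
(x\circ y)\circ x = R(R(x)\cdot R(y))\cdot R(x) = (a\cdot b)\cdot a,
\qquad
x\circ (y\circ x) = a\cdot (b\cdot a),
\]
which agree by the flexible law of $(\A,\cdot)$; an entirely parallel reduction turns the Jordan axiom for $\circ$ into the Jordan axiom of $(\A,\cdot)$ evaluated at $a,b$. For the remaining two products $x\circ y = R(x)\cdot y$ and $x\circ y = x\cdot R(y)$ the expansion has the same shape, but after simplification one is left comparing quantities such as $(R(x)\cdot R(y))\cdot x$ with $R(x)\cdot (R(y)\cdot x)$, in which one factor is a raw $x$ rather than an element of the image of $R$.

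The main obstacle lies precisely in these last two cases: flexibility and the Jordan identity of $(\A,\cdot)$ produce no direct cancellation on a triple whose elements are not all in $R(\A)$. To close the argument I would use the multiplicativity hypothesis a second time, in the form $R(x)\cdot R(y)=R(x\cdot y)$, to rewrite each $R$-product as the image of an ordinary product, and then invoke the linearized versions of the flexible and Jordan identities of $(\A,\cdot)$ to absorb the residual associators, reducing every obstruction to an identity already valid in the ambient Jordan algebra. With this, each of the three candidate products satisfies both axioms of a non-commutative Jordan algebra.
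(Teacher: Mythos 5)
Your handling of the third product $x\circ y=R(x)\cdot R(y)$ is correct and complete: writing $a=R(x)$, $b=R(y)$, idempotence and multiplicativity collapse both axioms for $\circ$ to the flexible and Jordan identities of $(\A,\cdot)$ evaluated at $a,b$, which is the same collapsing device the paper uses. The genuine gap is in the other two products, exactly at the point you flagged, and the repair you propose cannot succeed: flexibility of $x\circ y=R(x)\cdot y$ is literally the mixed associativity $(R(x)\cdot R(y))\cdot x=R(x)\cdot(R(y)\cdot x)$ in which $x$ need not lie in $R(\A)$, and no linearization of the flexible or Jordan identities yields it, because it is simply not a consequence of the hypotheses. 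Concretely, let $\A$ be the symmetric $2\times 2$ matrices with entries in the dual numbers $\mathbb{R}[\epsilon]$, $\epsilon^{2}=0$, under $a\ast b=\frac12(ab+ba)$ (a commutative Jordan algebra, being $\A^{+}$ of an associative algebra), and let $R$ be reduction modulo $\epsilon$; then $R$ is linear, $R^{2}=R$ and $R(x\ast y)=R(x)\ast R(y)$, yet for $x=e_{11}+\epsilon(e_{12}+e_{21})$, $y=e_{11}$ and $x\circ y=R(x)\ast y$ one finds $(x\circ y)\circ x=e_{11}+\frac{\epsilon}{2}(e_{12}+e_{21})$ while $x\circ(y\circ x)=e_{11}+\frac{\epsilon}{4}(e_{12}+e_{21})$, so $(\A,\circ)$ is not even flexible; by commutativity of $\ast$ the same example defeats $x\circ y=x\ast R(y)$.

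You should also know that the paper's own proof does not get past this obstruction either: it silently re-associates, passing for instance from $(R(x)\cdot R(x))\cdot(R(y)\cdot x)$ to $((R(x)\cdot R(x))\cdot R(y))\cdot x$, and from $(R(x)\cdot R(y))\cdot x$ to $R(x)\cdot(R(y)\cdot x)$ --- steps that are valid in an associative algebra but not in a general Jordan algebra. So the obstruction you noticed is a defect of the statement as written for those two products, not something a cleverer use of linearized identities can absorb. If one adds the hypothesis that $\cdot$ is associative (as in the matrix algebras underlying the paper's examples), your expansion for $R(x)\cdot y$ and $x\cdot R(y)$ closes immediately; under the stated hypotheses only your third case stands.
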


\begin{proof}
Let $\A$  be a Jordan algebra and let $R : \A \rightarrow  \A$ be a linear
map such that  $R^{2}(x)=R(x) \text{  y  } R(x)\cdot R(y) = R(x\cdot y)$ for all $x, y \in \A.$ So
\begin{align*}
(x\circ x) \circ (y \circ x) & = R(R(x)\cdot x)\circ (R(y)\cdot x ) \\
                             & = (R(x)\cdot R(x))\cdot (R(y)\cdot x)\\
                             & = ((R(x)\cdot R(x))\cdot R(y))\cdot x \\
                             & = ((R^{2}(x)\cdot R^{2}(x))\cdot R(y))\cdot x \\
                             & = R((R(x)\cdot R(x))\cdot y)\cdot x \\
                             & = R((R^{2}(x)\cdot R(x))\cdot y)\cdot x \\
                             & = R(R(R(x)\cdot x)\cdot y)\cdot x \\
                             & = ((x \circ x)\circ y)\circ x .
\end{align*}
This means that $(x\circ x) \circ (y \circ x)= ((x \circ x)\circ y)\circ x $ for all $x, y \in \A.$
Since
\begin{align*}
(x\circ y)\circ x   = R(R(x)\cdot y)\cdot x
                    = (R(x)\cdot R(y))\cdot x
                    = R(x)\cdot ( R(y)\cdot x)
                    = x\circ(y\circ x),
\end{align*}
then $(x\circ y)\circ x = x\circ (y\circ x) $ for all $x, y \in \A.$ Therefore, $(\A, \circ)$ is a Jordan algebra.
\end{proof}

\begin{exam}\label{ejemplo1}
The element $u=\left(\begin{array}{ccc}
                         1  & -1 & 1 \\
                         1  & -1 & 1 \\
                         1  & -1 & 1  \\
                 \end{array}\right)$ satisfies: $u^{2}=u$ and $x\cdot u=x$ for all $x\in \A$. The algebra of matrices $$\A=\left\{ \left(
              \begin{array}{ccc}
                x  &  -x  & x \\
                w  &  -w  & w  \\
                p  &  -p  & p  \\
              \end{array}
            \right)  : x, w, p \in \mathbb{R}
     \right\},$$ is an associative algebra under the usual matrix multiplication. $\A$  is a commutative algebra (non-associative) under the product $x \ast y = x \cdot y + y \cdot x$ ,
where $\cdot$ is the usual matrix multiplication. Then the linear map $R:\A\longrightarrow \A$ defined by

\begin{align*}
R\left(\left(
              \begin{array}{ccc}
                x  &  -x  & x \\
                w  &  -w  & w  \\
                p  &  -p  & p  \\
              \end{array}
            \right)  \right)=&  \left(
              \begin{array}{ccc}
                1  & -1  & 1 \\
                1  & -1  & 1  \\
                1  & -1  & 1  \\
              \end{array}
            \right)\cdot\left(
              \begin{array}{ccc}
                x  &  -x  & x \\
                w  &  -w  & w  \\
                p  &  -p  & p  \\
              \end{array}
            \right)  \\
                          =& (x-w+p)\left(
              \begin{array}{ccc}
                1  & -1  & 1 \\
                1  & -1  & 1  \\
                1  & -1  & 1  \\
              \end{array}
            \right)\\
\end{align*}
satisfies $R^{2}(x)=R(x)$  and $R(x)\ast R(y) = R(x\ast y)$ for all $x, y \in \A$. Therefore, we can define a Jordan algebra structure on $\A$ given by  $x \circ y = R(x) \ast R(y)$, and from it we can define a new Jordan algebra structure on $\A$ given by  $x \circ_{2} y = R(x) \circ y$.
\end{exam}

\section{Construction of  (left) Leibniz algebras from Associative Algebras with an Endomorphism Operator.}
Leibniz algebras were first introduced by J.-L. Loday in ~\cite{Loday1993} as a non-antisymmetric version of Lie algebras, and many results of Lie algebras have been extended to Leibniz algebras. Leibniz algebras play a significant role in different areas of mathematics and physics.

\begin{defn}
A (left) Leibniz algebra $\Lom$ is a vector space equipped with a bilinear map $$[\, , ]:\Lom\times \Lom\rightarrow \Lom$$ satisfying the (left) Leibniz identity
\begin{equation}\label{Ec:Leibniz}
[x,[y,z]]=[[x,y],z] + [y,[x,z]] \text{for all }  x,y,z \in \Lom.
\end{equation}
\end{defn}

\smallskip

\begin{rem}
We may pass from the right to the left Leibniz algebra by considering a new multiplication $x\circ y = [y, x]$. For a Leibniz algebra $\Lom$, we define left multiplication $l_{a} : \Lom \rightarrow \Lom$ by an element $a$ on an element $b$ by $l_{a}(b) = [a,b]$. Similarly, right multiplication by an element $a$ on an element $b$ is defined by $r_{a}(b) = [b,a]$. An algebra $\Lom$ over $\Field$ is a left Leibniz algebra if for every $x\in \Lom$ the corresponding operator $l_{x}$  of left multiplication is a derivation of $\Lom$, i.e., the mapping $l_{x}$ satisfies $l_{x}(a\cdot b) = l_{x}(a)\cdot b + a\cdot l_{x}(b)$,  $l_{x}\in Der(\Lom)$. Thus, left multiplication is a derivation in a left Leibniz algebra while right multiplication is not necessarily a derivation.
\end{rem}

\smallskip

\begin{prop}{~\cite{Vladimir2017} }
Let $\A$ be an associative algebra over a field $\Field$ and let $$R : \A \rightarrow \A$$ be an endomorphism of $\A$ such that $R^{2} = R$. Define the binary operation $[\, ·, ·]$ on $A$ by the following rule: $[a, b] = R(a)\cdot b - b\cdot R(a)$ for all elements $a,b \in \A$, Then, with respect to the operations $+$ and $[\, ·,·]$,  $\A$ becomes a Leibniz algebra.
\end{prop}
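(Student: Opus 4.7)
The plan is a direct verification of the left Leibniz identity $[x,[y,z]]=[[x,y],z]+[y,[x,z]]$, using only the associativity of $\cdot$, the endomorphism property $R(a\cdot b)=R(a)\cdot R(b)$, and the idempotency $R^{2}=R$. The one nontrivial preliminary observation is the formula for $R$ applied to a bracket: since
\[
R([x,y])=R(R(x)\cdot y)-R(y\cdot R(x))=R^{2}(x)\cdot R(y)-R(y)\cdot R^{2}(x),
\]
the hypothesis $R^{2}=R$ collapses this to $R([x,y])=R(x)\cdot R(y)-R(y)\cdot R(x)$.

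With this in hand, I would expand each of the three bracketed terms. In $[x,[y,z]]$ and $[y,[x,z]]$ the inner bracket appears only as an element being multiplied by $R(x)$ or $R(y)$ on one side, so associativity of $\cdot$ yields four triple products in each case. For $[[x,y],z]$ I would first apply $R$ to $[x,y]$ using the formula above and then distribute across $z$ on both sides. The outcome can be recorded as
\begin{align*}
[x,[y,z]] &= R(x)\cdot R(y)\cdot z - R(x)\cdot z\cdot R(y) - R(y)\cdot z\cdot R(x) + z\cdot R(y)\cdot R(x),\\
[[x,y],z] &= R(x)\cdot R(y)\cdot z - R(y)\cdot R(x)\cdot z - z\cdot R(x)\cdot R(y) + z\cdot R(y)\cdot R(x),\\
[y,[x,z]] &= R(y)\cdot R(x)\cdot z - R(y)\cdot z\cdot R(x) - R(x)\cdot z\cdot R(y) + z\cdot R(x)\cdot R(y).
\end{align*}

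The final step is purely bookkeeping: when summing $[[x,y],z]+[y,[x,z]]$, the pairs $\pm R(y)\cdot R(x)\cdot z$ and $\mp z\cdot R(x)\cdot R(y)$ cancel, and the four surviving terms coincide term-for-term with the expansion of $[x,[y,z]]$. I do not foresee a real obstacle beyond keeping the twelve triple products aligned. The only conceptual point worth flagging is that the identity $R([x,y])=R(x)\cdot R(y)-R(y)\cdot R(x)$, and hence the whole calculation, genuinely uses $R^{2}=R$; without idempotency, stray $R^{2}(x)$ factors would persist in $[[x,y],z]$ and obstruct the cancellation, so the proof really is an argument about idempotent endomorphisms rather than arbitrary ones.
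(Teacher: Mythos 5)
Your verification is correct: the expansions of $[x,[y,z]]$, $[[x,y],z]$ and $[y,[x,z]]$ are accurate, the cancellations go through, and you correctly isolate where the hypotheses enter, namely that the endomorphism property together with $R^{2}=R$ gives $R([x,y])=R(x)\cdot R(y)-R(y)\cdot R(x)$. The paper states this proposition only as a citation and gives no proof of it, but your argument is precisely the same direct-expansion technique the paper uses for its adjacent Lemma with the modified bracket $[x,y]=R(x)\cdot y-R(y)\cdot R(x)$, so your proposal simply supplies the omitted computation and is sound.
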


\begin{lem} \label{l:APPROX}
Let $\A$ be a associative algebra and let $R: \A\rightarrow \A$ be a linear map. Suppose that $R^{2}(x)=R(x) \text{ y }  R(x)\cdot R(y)=R(x\cdot y) \quad \text{for all } x,y\in\A.$
Then there exists a Leibniz structure on $\A$ given by
\begin{equation} \label{e:lei}
 [x,y]=R(x)\cdot y-R(y)\cdot R(x) \quad \text{for all } x,y\in\A.
 \end{equation}
\end{lem}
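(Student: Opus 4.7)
The plan is to verify the left Leibniz identity $[x,[y,z]] = [[x,y],z] + [y,[x,z]]$ directly by expanding the bracket and using the two hypotheses on $R$ together with associativity of $\cdot$.

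The first step, and the main lemma I would isolate, is the computation of $R([x,y])$. Since $[x,y] = R(x)\cdot y - R(y)\cdot R(x)$, applying $R$ and using multiplicativity $R(a\cdot b)=R(a)\cdot R(b)$ together with idempotence $R^{2}=R$ gives
\[
R([x,y]) \;=\; R(R(x)\cdot y) - R(R(y)\cdot R(x)) \;=\; R(x)\cdot R(y) - R(y)\cdot R(x).
\]
So $R$ sends the bracket into the ordinary commutator of $R$-images. This identity is the engine that will drive the rest of the computation, because it turns the outer $R$'s appearing in nested brackets into products that associativity can rearrange.

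Next I would expand the three nested brackets. For the left-hand side,
\[
[x,[y,z]] = R(x)\cdot[y,z] - R([y,z])\cdot R(x),
\]
and substituting the formulas for $[y,z]$ and $R([y,z])$ yields a sum of four associative monomials in $R(x), R(y), R(z), z$. I would do the analogous expansion for $[[x,y],z]$ and $[y,[x,z]]$, producing four terms each. In total the right-hand side has eight terms; associativity of $\cdot$ lets me drop all parentheses and compare monomials with the four on the left.

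The only real obstacle is bookkeeping: one must check that in $[[x,y],z] + [y,[x,z]]$ the two terms $R(y)\cdot R(x)\cdot z$ (with opposite signs) and the two terms $R(z)\cdot R(x)\cdot R(y)$ (with opposite signs) cancel, leaving exactly
\[
R(x)\cdot R(y)\cdot z - R(x)\cdot R(z)\cdot R(y) - R(y)\cdot R(z)\cdot R(x) + R(z)\cdot R(y)\cdot R(x),
\]
which matches $[x,[y,z]]$. Once that is verified, the Leibniz identity (\ref{Ec:Leibniz}) holds and $(\A,[\,,\,])$ is a left Leibniz algebra. Note that antisymmetry is \emph{not} used or needed, which is precisely why one obtains a Leibniz rather than a Lie structure; in fact $[x,x] = R(x)\cdot x - R(x)\cdot R(x)$ is typically nonzero.
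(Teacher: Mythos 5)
Your proposal is correct and follows essentially the same route as the paper: both compute $R([a,b]) = R(a)\cdot R(b) - R(b)\cdot R(a)$ from multiplicativity and $R^{2}=R$, expand the three nested brackets into associative monomials, and verify the left Leibniz identity by cancellation. Your explicit identification of the cancelling pairs $R(y)\cdot R(x)\cdot z$ and $R(z)\cdot R(x)\cdot R(y)$ is in fact slightly more detailed than the paper's final step, which leaves that comparison to the reader.
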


\begin{proof}
Let $x, y, z \in \A$, then we have\,
\begin{align*}
[[x,y],z]& = R([x,y])\cdot z-R(z)\cdot R([x,y]) \\
         & = R(R(x)\cdot y-R(y)\cdot R(x))\cdot z-R(z)\cdot R(R(x)\cdot y-R(y)\cdot R(x)) \\
         & = (R(x)\cdot R(y)-R(y)\cdot R(x))\cdot z-R(z)\cdot (R(x)\cdot R(y)-R(y)\cdot R(x)) \\
[y,[x,z]]& = R(y)\cdot [x,z]-R([x,z])\cdot R(y) \\
         & = R(y)\cdot ( R(x)\cdot z-R(z)\cdot R(x) )-R( R(x)\cdot z-R(z)\cdot R(x))\cdot R(y)  \\
         & = R(y)\cdot ( R(x)\cdot z-R(z)\cdot R(x) )-( R(x)\cdot R(z)-R(z)\cdot R(x))\cdot R(y)
\end{align*}
Then
\begin{align*}
[[x,y],z]+[y,[x,z]] & = (R(x)\cdot R(y)-R(y)\cdot R(x))\cdot z-R(z)\cdot (R(x)\cdot R(y)-R(y)\cdot R(x)) \\
                    &  \hspace{0.2cm} + R(y)\cdot ( R(x)\cdot z-R(z)\cdot R(x) )-( R(x)\cdot R(z)-R(z)\cdot R(x))\cdot R(y)
\end{align*}
so
\begin{align*}
[[x,y],z] + [y,[x,z]] & = (R(x)\cdot R(y))\cdot z -(R(x)\cdot R(z))\cdot R(y) - R(y)\cdot (R(z)\cdot R(x))  \\
                      & \hspace{0.5cm}+ R(z)\cdot (R(y)\cdot R(x))
\end{align*}
On the other hand, we have
\begin{align*}
[x,[y,z]]& = R(x)\cdot [y,z]-R([y,z])\cdot R(x) \\
         & = R(x)\cdot ( R(y)\cdot z-R(z)\cdot R(y) ) - R( R(y)\cdot z-R(z)\cdot R(y))\cdot R(x)  \\
         & = R(x)\cdot ( R(y)\cdot z-R(z)\cdot R(y) ) - (R(y)\cdot R(z)-R(z)\cdot R(y))\cdot R(x)
\end{align*}
Note that $R^{2}(x)=R(x) \text{ y }  R(x)\cdot R(y)=R(x\cdot y)$ for all $x, y \in \A$ implies $$[x,[y,z]]=[[x,y],z]+ [y,[x,z]]  \text{ for all }  x, y, z \in \A . $$
\end{proof}

\begin{exam}
The element $u=\left(\begin{array}{ccc}
                         -1  &1 & 1 \\
                         -1  &1 & 1 \\
                         -1  &1 & 1  \\
                 \end{array}\right)$ satisfies $u^{2}=u$ and $x\cdot u=x$ for all $x\in \A$. The algebra of matrices
$$\A=\left\{\left(
              \begin{array}{ccc}
                -y  & y  & y \\
                -m  & m  & m  \\
                -t  & t  & t  \\
              \end{array}
            \right):     y,m,t\in \mathbb{R}
     \right\} $$ is an associative algebra under the usual matrix multiplication.  Then the linear map $R:\A \longrightarrow \A$ defined by

\begin{align*}
R\left(\left(
              \begin{array}{ccc}
                -y & y & y \\
                -m & m & m \\
                -t & t & t \\
              \end{array}
            \right) \right)
         &=\left(\begin{array}{ccc}
                               -1&1 &1 \\
                               -1&1 &1 \\
                               -1&1 &1 \\
                 \end{array}\right)\cdot \left(
              \begin{array}{ccc}
                -y & y & y \\
                -m & m & m \\
                -t & t & t \\
              \end{array}
            \right)\\
        &=(-y+m+t)\left(\begin{array}{ccc}
                               -1&1 &1 \\
                               -1&1 &1 \\
                               -1&1 &1 \\
                        \end{array}\right)
\end{align*}
satisfies $R^{2}(x)=R(x)$ and $R(x)\cdot R(y) = R(x\cdot  y)$ for all $x, y \in \A.$  Therefore, we can define a Leibniz structure on $\A$ given by  $$[x, y] = R(x)\cdot y - R(y)\cdot R(x).$$
\end{exam}

\section{Construction of Pre-Lie algebras from Commutative Associative Algebras with a Endomorphism Operator}
In this section we present, in Proposition \ref{PreLie2} a construction of a Pre-Lie algebra structure given by $x\circ y=R(x)\cdot R(y)-y\cdot R(x)$ where $R$ is an endomorphism operator.

\begin{defn}
An algebra $\A$ over $\Field$ with a bilinear product $\circ$ which satisfies the following identity:
\begin{equation}\label{e:WeakBase}
(x\circ y)\circ z-x\circ (y\circ z)=  (y\circ x )\circ z-y\circ (x\circ z) \text{ for all }  x,y,z\in A
\end{equation} is called a Left Pre-Lie algebra.
\end{defn}

\begin{rem}
There is a construction of pre-Lie algebras using a commutative associative algebra $(\A,\cdot)$ with a derivation $D$ on $\A$,  the new product $a\ast b =a\cdot D(b)$, $\forall a,b\in \A$ makes $(\A, \ast)$ become a Novikov algebra, Novikov algebra is a pre-Lie algebra satisfying an additional identity: $(xy)z = (xz)y, \forall x,y,z \in \A$. There are some generalizations of the previous result for a commutative associative algebra $(\A,\cdot)$. If $D$  is a derivation on $\A$ , then the new product $x\ast_{a} y =x\cdot D(y)+a\cdot x \cdot y$, $\forall x,y \in \A$  makes $(\A, \ast_{a})$ become a Novikov algebra for a fixed element $a \in  \Field$ or $a \in \A$  (~\cite{Filipov1989, Xu1996}). In the case of an associative algebra $(\A,\cdot)$ with a Rota-Baxter relation of weight 1, it is known that if $x\ast y =R(x)\cdot y - y\cdot R(x) - x\cdot y$, $\forall x,y \in \A$, then the product $\ast$ defines a pre-Lie algebra on $\A$. (~\cite{Ebrahimi2002, Golubschik2000} )
\end{rem}

\begin{prop}\label{PreLie2}
Let $\A$ be a commutative associative algebra and let $R : \A \rightarrow  \A$  be a linear map such that $R^{2}(x)=R(x)$  and  $R(x)\cdot R(y) = R(x\cdot y)$ for all $x, y \in \A.$ Then we can define a Pre-Lie algebra structure on $\A$ given by
\begin{equation} \label{e:PreLi}
x\circ y=R(x)\cdot R(y)-y\cdot R(x) \hspace{0.2cm} (\text{respectively } x\circ y =R(x)\cdot y-R(y)\cdot R(x)).
\end{equation}
\end{prop}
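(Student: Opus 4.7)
The plan is to verify the left pre-Lie identity directly, relying on one structural observation that collapses most of the calculation: under our hypotheses $R$ annihilates the new product, that is, $R(x\circ y)=0$ for all $x,y\in\A$. For $x\circ y=R(x)\cdot R(y)-y\cdot R(x)$ this is immediate from applying $R$ and using multiplicativity together with $R^{2}=R$: one gets $R(R(x)\cdot R(y))=R^{2}(x\cdot y)=R(x)\cdot R(y)$ and $R(y\cdot R(x))=R(y)\cdot R^{2}(x)=R(y)\cdot R(x)$, so $R(x\circ y)=R(x)\cdot R(y)-R(y)\cdot R(x)$, which vanishes by commutativity of $\A$.

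With this in hand, two of the four terms of the pre-Lie associator collapse. Since $(x\circ y)\circ z=R(x\circ y)\cdot R(z)-z\cdot R(x\circ y)$, we have $(x\circ y)\circ z=0$, and symmetrically $(y\circ x)\circ z=0$. For the remaining two, $R(y\circ z)=0$ reduces $x\circ(y\circ z)$ to $-(y\circ z)\cdot R(x)$, and in the same way $y\circ(x\circ z)=-(x\circ z)\cdot R(y)$. The pre-Lie identity therefore boils down to the single relation
\[
(y\circ z)\cdot R(x)=(x\circ z)\cdot R(y).
\]
Expanding the definition of $\circ$ on each side, both reduce to $R(x)\cdot R(y)\cdot R(z)-z\cdot R(x)\cdot R(y)$ once we invoke commutativity of $(\A,\cdot)$, so equality is automatic.

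For the parenthetical product $x\circ y=R(x)\cdot y-R(y)\cdot R(x)$ the argument is entirely parallel: multiplicativity of $R$ together with commutativity again gives $R(x\circ y)=R(x)\cdot R(y)-R(y)\cdot R(x)=0$, the two $(\,\cdot\circ\cdot\,)\circ z$ summands drop out, and the residual identity reduces to a commutativity check in $\A$. The only step that requires any real insight is noticing that $R$ kills the new bracket; everything after that is routine bookkeeping, with the commutativity of $\A$ as the sole ingredient used in the final reduction, and it is precisely at this last reduction that commutativity (not merely associativity) is essential.
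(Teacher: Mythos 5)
Your proof is correct and follows essentially the same route as the paper's: the paper's direct expansion also rests on the fact that $R(x\circ y)=R(x)\cdot R(y)-R(y)\cdot R(x)=0$, which is exactly why its $(x\circ y)\circ z$ and $(y\circ x)\circ z$ terms vanish. Packaging this as an explicit lemma and spelling out the final check $(y\circ z)\cdot R(x)=(x\circ z)\cdot R(y)$ only makes explicit a commutativity step the paper leaves implicit at the end of its computation.
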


\begin{proof}
Let $x,y,z\in \A$, then we have
\begin{align*}
(x\circ y)\circ z  & = (R(x)\cdot R(y)-y\cdot R(x))\circ z  \\
                   & = R(R(x)\cdot R(y)-y\cdot R(x))\cdot R(z)-z\cdot R(R(x)\cdot R(y)-y\cdot R(x)) \\
                   & = (R(x)\cdot R(y)-R(y)\cdot R(x))\cdot R(z)-z\cdot (R(x)\cdot R(y)-R(y)\cdot R(x)) \\
                   & = 0 \\
x\circ (y\circ z)  & = x\circ (R(y)\cdot R(z)-z\cdot R(y))\\
                   & = R(x)\cdot R(R(y)\cdot R(z)-z\cdot R(y))- (R(y)\cdot R(z)-z\cdot R(y))\cdot R(x) \\
                   & = R(x)\cdot(R(y)\cdot R(z)-R(z)\cdot R(y)) - (R(y)\cdot R(z)-z\cdot R(y))\cdot R(x)\\
                   & = -(R(y)\cdot R(z)-z\cdot R(y))\cdot R(x)
\end{align*}
On the other hand, we have
\begin{align*}
(y\circ x)\circ z  & = (R(y)\cdot R(x)-x\cdot R(y))\circ z  \\
                   & = R(R(y)\cdot R(x)-x\cdot R(y))\cdot R(z)-z\cdot R(R(y)\cdot R(x)-x\cdot R(y)) \\
                   & = (R(y)\cdot R(x)-R(x)\cdot R(y))\cdot R(z)-z\cdot (R(y)\cdot R(x)-R(x)\cdot R(y)) \\
                   & = 0 \\
y\circ (x\circ z)  & = y\circ (R(x)\cdot R(z)-z\cdot R(x))\\
                   & = R(y)\cdot R(R(x)\cdot R(z)-z\cdot R(x))- (R(x)\cdot R(z)-z\cdot R(x))\cdot R(y) \\
                   & = R(y)\cdot (R(x)\cdot R(z)-R(z)\cdot R(x)) - (R(x)\cdot R(z)-z\cdot R(x))\cdot R(y)\\
                   & = -(R(x)\cdot R(z)-z\cdot R(x))\cdot R(y)
\end{align*}

Therefore, $(x\circ y)\circ z-x\circ (y\circ z) =  (y\circ x )\circ z-y\circ (x\circ z).$
\end{proof}

\subsection{Examples of Pre-Lie algebras from commutative associative algebras with the endomorphism operator.}

In this subsection we present in the example \ref{PreLieC} a commutative associative algebra $\A$ with the Hadamard or entrywise matrix multiplication, that allow us to give an example of Pre-lie algebras from a endomorphism operator with certain properties on the algebra  $\A$.

\begin{exam}\label{PreLieC}
Let $\A$ be the vector space of all $2\times 2$ matrices over $\mathbb{R},$

$$\A=\left\{\left(\begin{array}{ccc}
                m  & x  \\
                n  & y  \\
              \end{array}\right): m,n,x, y \in \mathbb{R}  \right\}.$$  $\A$  is a commutative associative algebra with the unusual matrix multiplication defined by:
\begin{equation}\label{Mult}
      \left(\begin{array}{ccc}
                m  & x  \\
                n  & y  \\
              \end{array}\right)\ast \left(
              \begin{array}{ccc}
                p  & z  \\
                r  & w  \\
              \end{array}
      \right)= \left(
              \begin{array}{ccc}
                mp     & xz  \\
                nr     & yw  \\
              \end{array}
      \right).
\end{equation}

The element $I=\left(\begin{array}{ccc}
                1  & 1  \\
                1  & 1  \\
              \end{array}\right)$  is the multiplicative identity.
\end{exam}

\begin{exam}
The element $u=\left(\begin{array}{ccc}
                1  & 0  \\
                0  & 0  \\

              \end{array}\right)$  with the usual matrix multiplication satisfies: $u^{2}=u$,  $u\cdot x \in \A$ and $x\cdot u=x$ for all $x\in \A$. The algebra of matrices
$$\A=\left\{\left(
              \begin{array}{ccc}
                m  & 0  \\
                n  & 0  \\
              \end{array}
            \right):  m,n\in\mathbb{R}
      \right\} $$ is a commutative associative algebra under the unusual matrix multiplication defined in \ref{Mult}. Then the linear map $R:\A \longrightarrow \A$ defined by

\begin{align*}
R\left( \left(
              \begin{array}{ccc}
                m  & 0  \\
                n  & 0  \\
              \end{array}
            \right) \right)=  & \left(\begin{array}{ccc}
                1  & 0  \\
                0  & 0  \\
              \end{array}\right)\cdot \left(
              \begin{array}{ccc}
                m  & 0  \\
                n  & 0  \\
              \end{array}
            \right) \\
       =  & \left(\begin{array}{ccc}
                m  & 0  \\
                0  & 0  \\
              \end{array}\right)
\end{align*}
satisfies $R^{2}(x)=R(x)$ and $R(x)\ast  R(y) = R(x\ast   y)$ for all $x, y \in \A.$  Therefore, we can define a Pre-Lie algebra structures on $\A$ given by  $$x\circ y = R(x)\ast  R(y) - y\ast  R(x).$$
\end{exam}

\section{Construction of Pre-Lie algebras from Commutative Associative Algebras with a Differential Operator }

We now present in the Propositions \ref{PreLie1}\, a construction of Pre-Lie algebra structure given by $x\circ y=R(x)\cdot y$ where $R$ is a differential operator, and we also introduce the notion of {\it left zero divisor on $\A$}: Let $( \, \h, \, \cdot \, )$ be a set $\h$ with a binary operation $\cdot$ on it, then an element $u$ of $\h$ is called a left zero divisor on $\A \subseteq \h$ if $x \cdot u = 0$ for all $x$ in $\A$.

\begin{prop}\label{PreLie1}
Let $\A$ be a commutative associative algebra and let $R : \A \rightarrow  \A$ be a linear
map such that $R^{2}(x)=\alpha.\,x$  and  $R(x)\cdot y+ x\cdot R(y) = R(x\cdot y)$ for all $x, y \in \A.$
Then we can define a Pre-Lie algebra structures on $\A$ given by $x\circ y=R(x)\cdot y.$
\end{prop}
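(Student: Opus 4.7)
My plan is to verify the pre-Lie identity
\[
(x\circ y)\circ z - x\circ(y\circ z) \;=\; (y\circ x)\circ z - y\circ(x\circ z)
\]
directly by expanding both sides using the two hypotheses on $R$, namely $R^{2}(x)=\alpha\, x$ and the Leibniz rule $R(x\cdot y)=R(x)\cdot y+x\cdot R(y)$, together with the commutativity and associativity of the underlying product $\cdot$ on $\A$.

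First I would expand $(x\circ y)\circ z$. By definition $x\circ y = R(x)\cdot y$, so $(x\circ y)\circ z = R(R(x)\cdot y)\cdot z$. Applying the Leibniz rule to $R(R(x)\cdot y)$ and then using $R^{2}(x)=\alpha\, x$, this becomes $\bigl(\alpha\, x\cdot y + R(x)\cdot R(y)\bigr)\cdot z$. Next I would expand $x\circ(y\circ z) = R(x)\cdot(R(y)\cdot z)$, and by associativity this equals $(R(x)\cdot R(y))\cdot z$. Taking the difference, the term $(R(x)\cdot R(y))\cdot z$ cancels and I am left with
\[
(x\circ y)\circ z - x\circ(y\circ z) \;=\; \alpha\,(x\cdot y)\cdot z.
\]

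The second step is the key observation: the right-hand side $\alpha\,(x\cdot y)\cdot z$ is symmetric in $x$ and $y$ because $\A$ is commutative. Therefore the same computation with $x$ and $y$ interchanged gives
\[
(y\circ x)\circ z - y\circ(x\circ z) \;=\; \alpha\,(y\cdot x)\cdot z \;=\; \alpha\,(x\cdot y)\cdot z,
\]
which establishes the left pre-Lie identity \eqref{e:WeakBase}.

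There is no real obstacle here; the argument is a direct calculation. The only points worth flagging are that commutativity of $\A$ is used essentially to equate $x\cdot y$ and $y\cdot x$ in the final comparison, and associativity is used to turn $R(x)\cdot(R(y)\cdot z)$ into $(R(x)\cdot R(y))\cdot z$ so that the ``derivative'' term $R(x)\cdot R(y)\cdot z$ produced by the Leibniz rule can cancel. The condition $R^{2}(x)=\alpha\, x$ is what makes the leftover term proportional to $x\cdot y\cdot z$, which is manifestly symmetric in $x,y$; any weaker control on $R^{2}$ would leave a nonsymmetric residue and the identity would fail.
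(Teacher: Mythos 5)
Your proof is correct and follows essentially the same route as the paper: expand $(x\circ y)\circ z - x\circ(y\circ z)$ via the Leibniz rule and $R^{2}(x)=\alpha\,x$ to get $\alpha\,(x\cdot y)\cdot z$, and conclude by the symmetry of this residue in $x,y$ (the paper redoes the computation with $x,y$ swapped and invokes commutativity at the end, which is the same argument). Your explicit remarks on where associativity and commutativity enter are accurate and, if anything, slightly more careful than the paper's write-up.
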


\begin{proof}
Let $x,y,z\in \A$, then
\begin{align*}
  (x\circ y)\circ z-x\circ (y\circ z)  & =  (R(x)\cdot y)\circ z-x\circ (R(y)\cdot z)\\
                                       & =  R(R(x)\cdot y)\cdot z-R(x)\cdot(R(y)\cdot z)\\
                                       & =  (R^{2}(x)\cdot y + R(x)\cdot R(y))\cdot z-R(x)\cdot(R(y)\cdot z) \\
                                       & =  (R^{2}(x)\cdot y)\cdot z\\
                                       & = ((\alpha.\,x)\cdot y)\cdot z .
\end{align*}
On the other hand, we have
\begin{align*}
                            (y\circ x )\circ z-y\circ (x\circ z) & =  (R(y)\cdot x)\circ z-y\circ (R(x)\cdot z)\\
                                                                 & =  R(R(y)\cdot x)\cdot z-R(y)\cdot(R(x)\cdot z) \\
                                                                 & = (R^{2}(y)\cdot x + R(y)\cdot R(x))\cdot z-R(y)\cdot(R(x)\cdot z)\\
                                                                 & = (R^{2}(y)\cdot x)\cdot z \\
                                                                 & = ((\alpha.\,y)\cdot x)\cdot z .
\end{align*}
Therefore, $(x\circ y)\circ z-x\circ (y\circ z) =  (y\circ x )\circ z-y\circ (x\circ z).$
\end{proof}

\subsection{Examples of Pre-Lie algebras from commutative associative algebras with a Differential Operator.}

\begin{prop}
Let $( \, \A, \, \cdot \, )$ be an associative subalgebra of an algebra $\h$, and suppose that there exists $u\in \h$ such that $u\cdot x  \in \A$ and $x\cdot u=0$ for all $x\in \A$. Then the linear map $R:\A\longrightarrow \A$ defined by $R(x)=u\cdot x$ satisfies
\begin{equation} \label{e:opedif}
R(x)\cdot y+x\cdot R(y)=R(x\cdot y) \text{ for all } x,y \in \A.
\end{equation}
\end{prop}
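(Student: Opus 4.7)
The plan is to verify the differential/Leibniz identity by direct calculation, mirroring the strategy used in the proof of Proposition \ref{p:Endo}, but now exploiting the hypothesis $x\cdot u = 0$ (instead of $x\cdot u = x$). The key algebraic facts I need are the definition $R(x)=u\cdot x$, the associativity of the product (which is available when $u$ interacts with elements of $\A$, just as it was tacitly used in Proposition \ref{p:Endo}), and the annihilation property $x\cdot u = 0$.

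First I would expand the right-hand side of \eqref{e:opedif}. By the definition of $R$,
\[
R(x\cdot y) = u\cdot (x\cdot y).
\]
Next I would expand the left-hand side term by term: $R(x)\cdot y = (u\cdot x)\cdot y$ and $x\cdot R(y) = x\cdot (u\cdot y)$. I would then apply associativity to each piece. Associativity rewrites $(u\cdot x)\cdot y$ as $u\cdot(x\cdot y)$, which already equals $R(x\cdot y)$. For the remaining term I would use associativity to get $x\cdot(u\cdot y) = (x\cdot u)\cdot y$, and then invoke the hypothesis $x\cdot u = 0$ to conclude $(x\cdot u)\cdot y = 0\cdot y = 0$.

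Putting these two computations together yields
\[
R(x)\cdot y + x\cdot R(y) = u\cdot(x\cdot y) + 0 = R(x\cdot y),
\]
which is exactly \eqref{e:opedif}. The only subtlety I anticipate — and it is a minor one — is that the identity $(u\cdot x)\cdot y = u\cdot(x\cdot y)$ and $x\cdot(u\cdot y) = (x\cdot u)\cdot y$ require the associative law to hold for triples that mix the element $u\in \h$ with elements of $\A$. This is the same tacit assumption made earlier (for instance in Proposition \ref{p:Endo}, where the argument $(u\cdot x)\cdot(u\cdot y) = u\cdot((x\cdot u)\cdot y)$ already needed such mixed associativity), so I would either invoke it explicitly or note that $\A\cup\{u\}$ generates an associative subalgebra of $\h$ in which the calculation takes place. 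Beyond this structural remark, the proof reduces to two short lines of associative manipulation.
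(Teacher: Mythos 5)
Your proposal is correct and follows essentially the same route as the paper's proof: rewrite $(u\cdot x)\cdot y = u\cdot(x\cdot y)$ and $x\cdot(u\cdot y) = (x\cdot u)\cdot y = 0$ by associativity and the hypothesis $x\cdot u = 0$, then compare with $R(x\cdot y) = u\cdot(x\cdot y)$. Your remark about mixed associativity with $u\in\h$ is a fair observation of an assumption the paper also uses tacitly, but it does not change the argument.
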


\begin{proof}
Let $x,y \in \A$, then
\begin{align*}
R(x)\cdot y+x\cdot R(y) &=(u\cdot x)\cdot y + x\cdot(u\cdot y)\\
                        &= u\cdot(x\cdot y) + (x\cdot u)\cdot y \\
                        &= u\cdot (x\cdot y)+ (0\cdot y)= u\cdot(x\cdot y).
\end{align*}
On the other hand, $R(x\cdot y)=u\cdot (x\cdot y).$ Therefore $R(x)\cdot y+x\cdot R(y) = R(x\cdot y)$ for all $x, y \in \A.$
\end{proof}

\begin{exam}
We consider the subalgebra
$$\A=\left\{\left(\begin{array}{ccc}
                   y & y & 0 \\
                   n & n & 0 \\
                   r & r & 0 \\
                 \end{array}\right): r, n , y \in \mathbb{R}  \right\}$$  under the usual matrix multiplication.
The element $u=\left(\begin{array}{rrr}
                   a    &  b  & c \\
                   -a   &  -b & -c \\
                    e   &  f  &  g\\
                 \end{array}\right)$ satisfies $x\cdot u=0$ and $u\cdot x\in \A$ for all $x\in \A$.

Then the linear map $R:\A\longrightarrow \A$ defined by

\begin{align*}
 R\left( \left(\begin{array}{rrr}
                   y    &  y  & 0 \\
                   n   &   n  & 0 \\
                   r   &   r  & 0\\
 \end{array}\right) \right)           & = \left(\begin{array}{rrr}
                                              a    &  b  & c \\
                                             -a   &  -b & -c \\
                                              e   &  f  &  g\\
                                          \end{array}\right)\cdot \left(\begin{array}{rrr}
                                                                     y    &  y  & 0 \\
                                                                     n   &   n & 0 \\
                                                                     r   &   r  & 0\\
                                                                   \end{array}\right)\\
                                      & = \left(\begin{array}{rrr}
                                              ay+bn+cr    &  ay+bn+cr  & 0 \\
                                              -ay-bn-cr   &  -ay-bn-cr & 0 \\
                                              ey+fn+gr    &   ey+fn+gr  & 0\\
                                           \end{array}\right)  \\
\end{align*}

satisfies $R(x)\cdot y+x\cdot R(y) = R(x\cdot y)$ for all $x, y \in \A.$
\end{exam}

\begin{exam}
The algebra of matrices
$$\A=\left\{\alpha\left(\begin{array}{ccc}
                   an & ap & aq \\
                   bn & bp & bq \\
                    n &  p &  q \\
                 \end{array}\right): \alpha \in \mathbb{R}\right\}$$ where $ a=-\beta b \text{ and }  b,n,p,q \in \mathbb{R}$ is a commutative associative algebra.

The element $u=\left(\begin{array}{cll}
                   a    &  \beta a   & \lambda\beta a \\
                   b    &  \beta b   & \lambda\beta b \\
                   1    &  \beta     &  \lambda\beta  \\
                 \end{array}\right)$ where $\lambda,\beta \in \mathbb{R}$,  satisfies \newline $u^{2}=(\lambda\beta)u$, \, $x\cdot u=0 \, ( \Leftrightarrow an+bp+q=0 )  $ and $u\cdot x\in \A$ for all $x\in \A$. \\
Then the linear map $R:\A\longrightarrow \A$ defined by
\begin{align*}
R\left(\left(\begin{array}{rrr}
                  an    &  ap  & aq \\
                   bn   &   bp  & bq \\
                    n   &    p  &  q\\
                 \end{array}\right) \right)
                 & =\left(\begin{array}{ccr}
                   a    &  \beta a   & \lambda\beta a \\
                   b    &  \beta b   & \lambda\beta b \\
                   1    &  \beta    & \lambda\beta \\
                 \end{array}\right)\cdot \left(\begin{array}{rrr}
                   an    &  ap  & aq \\
                   bn    &  bp  & bq \\
                   n     &  p    & q\\
                 \end{array}\right)\\
                 &=(\lambda\beta )\left(\begin{array}{ccc}
                    an           &  ap   & aq \\
                    bn           &  bp   & bq \\
                     n           &   p   &  q\\
                 \end{array}\right)\\
\end{align*}
satisfies $R^{2}(x)=(\lambda\beta)R(x)$ and $R(x)\cdot y + x\cdot R(y)=R(x\cdot y)$ for all $x, y \in \A.$ Therefore, we can define a Pre-Lie algebra structures on $\A$ given by $x\circ y=R(x)\cdot y.$
\end{exam}

\section{Construction of flexible algebras from Associative Algebras with a left averaging operator.}

\begin{defn}
A flexible algebra is a vector space $\J$ over a field $\Field$ of characteristic $\neq 2$  with a binary operation $\circ$ satisfying for $x,y\in \J$ the following identity:
\begin{equation}\label{Ec:flexible}
(x\circ y)\circ  x = x\circ (y \circ x).
\end{equation}
\end{defn}

\begin{rem}
The study flexible algebras was initiated by Albert (\cite{ALBERT1948}) and investigated by the authors Myung, Okubo, Laufer, Tomber and Santilli, see for example (\cite{MYUNG1982}).
\end{rem}

\begin{prop}\label{p:ESSPECT}
Suppose $(\A, \cdot )$ is a flexible algebra, and $R : \A \rightarrow  \A$ is a linear
map, such that $R^{2}=R$ and
\begin{equation} \label{e:ESSINEQ}
   R(x)\cdot R(y) = R(R(x)\cdot y)= R(x \cdot y) \text{ for all x, y } \in \A .
\end{equation}
Then we can define a new flexible algebra structures on $\A$ given by $$x\circ y=R(x\cdot y).$$

\end{prop}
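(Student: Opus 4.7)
The plan is to unpack both sides of the flexibility identity $(x\circ y)\circ x = x\circ (y\circ x)$ under the new product $x\circ y = R(x\cdot y)$, reduce each side to an expression of the form $R$ applied to a triple product in the original algebra, and then invoke the flexibility of $(\A,\cdot)$. The three pieces of the hypothesis play distinct roles: $R(R(x)\cdot y) = R(x\cdot y)$ will handle the left-hand side, the endomorphism identity $R(x)\cdot R(y) = R(x\cdot y)$ combined with $R^2 = R$ will handle the right-hand side, and the original flexibility will close the argument.

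First I would expand
\[
(x\circ y)\circ x \;=\; R\bigl((x\circ y)\cdot x\bigr) \;=\; R\bigl(R(x\cdot y)\cdot x\bigr),
\]
and then apply the assumption $R(R(a)\cdot b) = R(a\cdot b)$ (taken with $a=x\cdot y$ and $b=x$) to conclude that $(x\circ y)\circ x = R\bigl((x\cdot y)\cdot x\bigr)$.

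Next I would expand
\[
x\circ(y\circ x) \;=\; R\bigl(x\cdot (y\circ x)\bigr) \;=\; R\bigl(x\cdot R(y\cdot x)\bigr),
\]
and here the main (small) obstacle appears: the hypothesis does not directly supply a right-sided averaging rule $R(a\cdot R(b)) = R(a\cdot b)$. I would derive it on the spot by combining the endomorphism identity with idempotence: substituting $R(b)$ for $b$ in $R(a)\cdot R(b) = R(a\cdot b)$ gives $R(a\cdot R(b)) = R(a)\cdot R(R(b)) = R(a)\cdot R(b) = R(a\cdot b)$. Applied with $a=x$ and $b=y\cdot x$, this yields $x\circ(y\circ x) = R\bigl(x\cdot (y\cdot x)\bigr)$.

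Finally, the flexibility of the original product $(\A,\cdot)$ gives $(x\cdot y)\cdot x = x\cdot(y\cdot x)$; applying $R$ to both sides and comparing with the two expressions computed above establishes $(x\circ y)\circ x = x\circ(y\circ x)$, so $(\A,\circ)$ is flexible. The only step requiring any thought is the derivation of the right-sided identity $R(a\cdot R(b)) = R(a\cdot b)$, which in hindsight is forced by having simultaneously an averaging identity and an endomorphism identity together with $R^2 = R$; everything else is bookkeeping.
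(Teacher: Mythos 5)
Your proof is correct, and every step checks: the left side reduces via the hypothesis $R(R(a)\cdot b)=R(a\cdot b)$ (with $a=x\cdot y$, $b=x$) to $R\bigl((x\cdot y)\cdot x\bigr)$; the right-sided identity $R(a\cdot R(b))=R(a\cdot b)$ does follow from $R(a\cdot R(b))=R(a)\cdot R(R(b))=R(a)\cdot R(b)=R(a\cdot b)$ using the endomorphism identity and $R^2=R$, so the right side becomes $R\bigl(x\cdot(y\cdot x)\bigr)$; and flexibility of $\cdot$ plus linearity of $R$ closes the argument. The paper's own proof has the same skeleton but normalizes in the opposite direction: it rewrites $(x\circ y)\circ x=R(R(x\cdot y)\cdot x)=(R(x)\cdot R(y))\cdot R(x)$ and $x\circ(y\circ x)=R(x\cdot R(y\cdot x))=R(x)\cdot R(R(y\cdot x))=R(x)\cdot(R(y)\cdot R(x))$, so its final appeal to flexibility of the original product is made at the elements $R(x)$, $R(y)$ --- i.e.\ it only ever needs flexibility on the image $R(\A)$, a slight bonus in generality. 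You instead pull everything inside a single application of $R$ and apply $R$ to the flexible law $(x\cdot y)\cdot x=x\cdot(y\cdot x)$ at the original elements; this uses exactly the same hypotheses (your derived right-sided identity is the same two inline steps the paper performs, just converted back to $R(x\cdot(y\cdot x))$ rather than left as $R(x)\cdot R(y\cdot x)$), and it makes transparent that $\circ$-flexibility is literally $R$ applied to $\cdot$-flexibility, which is arguably the cleaner bookkeeping.
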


\begin{proof}
Let $\A$  be a flexible algebra and  $R : \A \rightarrow  \A$ is a linear map such that  $R(x)\cdot R(y)=R(R(x)\cdot y)=R(x\cdot y)$ for all $x, y \in \A.$ So $(x\circ y) \circ x = (R(x\cdot y))\circ x =R(R(x\cdot y)\cdot x )= (R(x)\cdot R(y))\cdot R(x).$  Since $ x\circ (y \circ x)  = x\circ ( R(y\cdot x)) = R(x\cdot R(y\cdot x))= R(x)\cdot R( R(y\cdot x))= R(x)\cdot  R(y\cdot x)= R(x)\cdot  (R(y)\cdot R(x))$ and $(\A, \cdot )$ is a flexible algebra, then we have $(x\circ y)\circ  x = x\circ (y \circ x)$  for all $x, y \in \A.$ Therefore $(\A, \circ )$ is a flexible algebra.
\end{proof}

\subsection{Examples of Flexible algebras from associative algebras with a left averaging operator.}

\begin{prop} \label{p:wAPPROX}
Let $(\, \A, \, \cdot \,)$ be an associative subalgebra of an algebra $\h$. Suppose that $\h$ has the following property:
\begin{quote}
  {There exists $u\in \h,$ such that $u\cdot x \in \A$ and $x\cdot u=u\cdot x$ for all $x\in \A$.}
\end{quote}
Then the linear map $R:\A\longrightarrow \A$ defined by $R(a)= u\cdot x $ satisfies the left averaging identity
\begin{equation} \label{e:averaging}
R(a)\cdot R(b)=R(R(a)\cdot b) \text{ for all } a,b \in \A.
\end{equation}
Furthemore, if $u^{2}=u.$ Then
\begin{equation} \label{e:averagingEnd}
R(a)\cdot R(b)=R(R(a)\cdot b)=R(a\cdot b) \text{ for all } a,b \in \A.
\end{equation}
\end{prop}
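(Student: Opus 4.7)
The plan is to prove both identities by direct computation, leveraging associativity of the ambient product together with the commutation hypothesis $x \cdot u = u \cdot x$ and, for the second part, the idempotence $u^{2} = u$. Since $\A$ is associative and $u \cdot x \in \A$ for every $x \in \A$, every intermediate product appearing in the manipulations lies in $\A$ (or is a product of two elements of $\A$), so associativity applies freely.

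First I would verify identity \eqref{e:averaging}. Starting from the definition,
\begin{equation*}
R(a) \cdot R(b) = (u \cdot a) \cdot (u \cdot b),
\end{equation*}
I would apply associativity to regroup as $u \cdot \bigl(a \cdot (u \cdot b)\bigr)$, then again as $u \cdot \bigl((a \cdot u) \cdot b\bigr)$. At this point the hypothesis $a \cdot u = u \cdot a$ (which holds because $a \in \A$) converts this into $u \cdot \bigl((u \cdot a) \cdot b\bigr) = R\bigl(R(a) \cdot b\bigr)$, which is the left averaging identity.

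For the supplementary statement \eqref{e:averagingEnd}, I would start from the expression $R\bigl(R(a) \cdot b\bigr) = u \cdot \bigl((u \cdot a) \cdot b\bigr)$ obtained above, and reassociate it as $(u \cdot u) \cdot (a \cdot b) = u^{2} \cdot (a \cdot b)$. Applying $u^{2} = u$ yields $u \cdot (a \cdot b) = R(a \cdot b)$, which completes the chain $R(a) \cdot R(b) = R(R(a) \cdot b) = R(a \cdot b)$.

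There is no real obstacle here beyond bookkeeping: the whole argument is a sequence of associativity moves and a single application of the commutation $a \cdot u = u \cdot a$. The only point that deserves a brief remark is that the associativity of $\A$ suffices — one does not need to assume $\h$ itself is associative in any strong sense, only that the products involving $u$ that appear (which are all of the form $u \cdot x$ or $x \cdot u$ with $x \in \A$, landing in $\A$) can be regrouped. This is guaranteed by the standing hypothesis that $u \cdot x \in \A$ together with associativity on $\A$.
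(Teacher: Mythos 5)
Your computation is correct and is essentially the paper's own proof: the same associativity regroupings together with the commutation $a\cdot u=u\cdot a$ give $R(a)\cdot R(b)=u\cdot\bigl((u\cdot a)\cdot b\bigr)=R(R(a)\cdot b)$, and the same collapse $u\cdot\bigl((u\cdot a)\cdot b\bigr)=u^{2}\cdot(a\cdot b)=u\cdot(a\cdot b)$ handles the case $u^{2}=u$. One small caveat: your closing remark overstates what associativity of $\A$ alone provides, since the regroupings use $u\in\h$ itself as a factor (e.g.\ the triple $u$, $a$, $u\cdot b$), so you are really invoking associativity of products involving $u$ in $\h$ --- which is the same implicit assumption the paper's proof makes.
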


\begin{proof}
Let $x,y \in \A$, by the hypothesis there exists $u\in \h$ such that $u\cdot x  \in \A$  and $x\cdot u=u\cdot x$ for all $x\in \A$, then
\begin{align*}
R(x)\cdot R(y)  &= (u\cdot x)\cdot(u\cdot y) \\
                &= u\cdot((x\cdot u)\cdot y) \\
                &= u\cdot((u\cdot x)\cdot y) \\
                &= R(R(x)\cdot y).
\end{align*}
If $u^{2}=u,$ then $R(x)\cdot R(y)=u\cdot((u\cdot x)\cdot y)=u^{2}\cdot( x\cdot y)=u\cdot( x\cdot y)=R(x\cdot y).$ Therefore $R(x)\cdot R(y) = R(R(x)\cdot y) = R(x\cdot y)$ for all $x, y \in \A.$
\end{proof}

\begin{exam}

We consider the subalgebra
$$ \A=\left\{\left(\begin{array}{ccc}
                   y & n & 0 \\
                   0 & y & 0 \\
                   0 & 0 & r \\
                 \end{array}\right): r, n , y \in \mathbb{R}  \right\}$$  under the usual matrix multiplication.

The element $u=\left(\begin{array}{ccr}
                   1    &  0   &  0 \\
                   0    &  1   &  0 \\
                   0    &  0   &  0\\
                 \end{array}\right)$ satisfies $u^{2}=u$, $x\cdot u=u\cdot x$ and $u\cdot x\in \A$ for all $x\in \A$. Then the linear map $R:\A\longrightarrow \A$ defined by
$$R\left( \left(\begin{array}{rrr}
                   y    &  n  & 0 \\
                   0   &   y  & 0 \\
                   0   &   0  & r\\
                 \end{array}\right) \right)=\left(\begin{array}{ccr}
                   1    &  0    & 0 \\
                   0    &  1    & 0 \\
                   0   &   0    & 0\\
                 \end{array}\right)\cdot \left(\begin{array}{rrr}
                   y    &   n  & 0 \\
                   0    &   y  & 0 \\
                   0    &   0  & r \\
                 \end{array}\right)=\left(\begin{array}{rrr}
                   y    &   n  & 0 \\
                   0    &   y  & 0 \\
                   0    &   0  & 0 \\
                 \end{array}\right)$$
satisfies $R^{2}=R$ and $R(x)\cdot R(y) = R(R(x)\cdot y)= R(x\cdot y)$ for all $x, y \in \A.$  Therefore, we can define a flexible algebra structures on $\A$ given by $x\circ y=R(x\cdot y).$ We also have that $\A$ is a Lie algebra with the product $[x,y]=x\cdot R(y)-y\cdot R(x)$ (see Proposition \ref{p:LieAlge}), Therefore $(\A, [ ,]) $ is a flexible algebra.
\end{exam}

\section{Construction of Rota-Baxter Operator}

In this section we present, in the Propositions \ref{PesoUno}, constructions of Rota-Baxter Operators of weight $\lambda=1$ and $\lambda=0$  from associative algebra with an element $u$ skew-idempotent or nilpotent of index 2 respectively, and we also introduce the notion of {\it Rota-Baxter Operator of weight $(\lambda , \beta)$}.\\

We recall from the Introduction:

\begin{defn}
Let $(\A,\cdot)$ be an associative algebra. A linear map $R:\A\rightarrow \A$ is called a Rota-Baxter operator of weight $\lambda$ on $\A$ if $R$
satisfies
\begin{equation}\label{Ec:weigh}
R(x)\cdot R(y)=R\left(R(x)\cdot y+x\cdot R(y)+\lambda\,  x\cdot y\right),
\end{equation}
for all $x,y\in \A$. A Rota-Baxter algebra (also known as a Baxter algebra) is an associative algebra $\A$ with a Rota-Baxter operator.
\end{defn}

\begin{rem}
One importance of the Rota-Baxter Algebra is its close relationship with other algebraic structures. For example pre-Lie algebras come naturally from a Rota Baxter-Operator on an Lie Algebras. ~\cite{Golubchik2000}, ~\cite{Medina1981} .
\end{rem}

\begin{defn}
An element $u\neq 0$  of an algebra $\A$ is called nilpotent if $u^{n}=0$  for some integer $n$ . The least such integer is called the index of $u$.
\end{defn}

\begin{defn}
An element $u\neq 0$  of an algebra $\A$  is said to be skew-idempotent with respect to a product $\cdot$ in the algebra $\A$  if $u\cdot u = -u$.
\end{defn}

\begin{prop}\label{PesoUno}
Let $(\, \A, \,  \cdot \,)$ be an associative algebra and suppose that there exists $u\in \A$ such that $u^{2}=-u$  and $u\cdot x\in \A$ for all $x\in \A$. Then the linear map $R:\A\longrightarrow \A$ defined by $R(x)=u\cdot x$ satisfies
\begin{equation} \label{e:RBO1 }
R\left(\, R(x)\cdot y+ x\cdot R(y) + x\cdot y \, \right)=R(x)\cdot R(y) \text{ for all } x,y \in \A.
\end{equation}
Furthemore, if $u^{2}= 0$, then $R$ is a Rota-Baxter operator of weight zero on $\A$.
\end{prop}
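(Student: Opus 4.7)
The plan is to verify the identity by direct calculation, relying only on the associativity of $\A$ and the defining relation $u^{2}=-u$. The map $R$ is defined by left multiplication by $u$, so every term appearing in the identity will reduce, after one or two applications of associativity, to a word of length three or four in the letters $u, x, y$. I would first expand each summand on the left-hand side separately and then combine.

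More concretely, I would compute $R(x\cdot y)=u\cdot(x\cdot y)=u\cdot x\cdot y$ directly, then $R(R(x)\cdot y)=u\cdot(u\cdot x)\cdot y=u^{2}\cdot x\cdot y=-u\cdot x\cdot y$ using associativity and the skew-idempotent relation, and finally $R(x\cdot R(y))=u\cdot(x\cdot(u\cdot y))=u\cdot x\cdot u\cdot y$. Adding the three contributions, the terms $-u\cdot x\cdot y$ and $+u\cdot x\cdot y$ cancel and we are left with $u\cdot x\cdot u\cdot y$. On the right-hand side, $R(x)\cdot R(y)=(u\cdot x)\cdot(u\cdot y)=u\cdot x\cdot u\cdot y$ by associativity, matching the left-hand side exactly. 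Linearity of $R$ then gives the claimed identity for the whole sum.

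For the second assertion, under the stronger hypothesis $u^{2}=0$, the only change is that $R(R(x)\cdot y)=u^{2}\cdot x\cdot y=0$, and one no longer needs the correction term $x\cdot y$ on the left: the same calculation as above yields $R(R(x)\cdot y+x\cdot R(y))=u\cdot x\cdot u\cdot y=R(x)\cdot R(y)$, which is the Rota--Baxter identity of weight zero. I would also remark (implicitly in the statement) that the well-definedness of $R$ as a map $\A\to\A$ is precisely guaranteed by the assumption $u\cdot x\in\A$ for every $x\in\A$.

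There is no real obstacle here: the argument is an associativity bookkeeping together with the substitution $u^{2}=-u$ (resp.\ $u^{2}=0$). The only small care needed is to keep the order of factors correct, since $\A$ is not assumed commutative; writing unbracketed words $u\cdot x\cdot u\cdot y$ once associativity is invoked makes the cancellation between $R(R(x)\cdot y)$ and $R(x\cdot y)$ visually transparent.
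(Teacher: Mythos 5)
Your proof is correct and follows essentially the same route as the paper: expand every term as a word in $u,x,y$, use associativity, and let the relation $u^{2}=-u$ (resp.\ $u^{2}=0$) kill the terms $u^{2}\cdot(x\cdot y)$ and $u\cdot(x\cdot y)$, leaving $(u\cdot x)\cdot(u\cdot y)=R(x)\cdot R(y)$. The only cosmetic difference is that you apply linearity to treat the three summands separately, whereas the paper expands the sum at once and groups the cancellation as $(u^{2}+u)\cdot(x\cdot y)$.
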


\begin{proof}
Let $x,y\in \A,$ then we have $R(x)\cdot R(y)= (u\cdot x)\cdot (u\cdot y).$ On the other hand,
\begin{align*}
R(R(x)\cdot y+x\cdot R(y) + x\cdot y)  & = R((u\cdot x)\cdot y+x\cdot(u\cdot y)+ x\cdot y ) \\
                                       & =u\cdot((u\cdot x)\cdot y+x\cdot(u\cdot y)+ x\cdot y )\\
                                       & = u^{2}\cdot ( x\cdot y)+(u\cdot x)\cdot(u\cdot y)+ u\cdot (x\cdot y)\\
                                       & =(u^{2}+u)\cdot ( x\cdot y) +(u\cdot x)\cdot(u\cdot y).
\end{align*}
Therefore $R(R(x)\cdot y+x\cdot R(y) + x\cdot y)=R(x)\cdot R(y) \text{ for all } x,y \in A.$  Now, if $u^{2}= 0$, then
\begin{align*}
R(R(x)\cdot y+x\cdot R(y)) &= (u\cdot x)\cdot(u\cdot y).
\end{align*}
Therefore $R(R(x)\cdot y+x\cdot R(y)) = R(x) \cdot R(y)$ for all $x, y \in \A.$
\end{proof}

\begin{exam}
The element $u=\left(\begin{array}{cc}
                   xy & -x^{2} \\
                   y^{2} & -xy \\
                 \end{array}\right)$ satisfies $u^{2}=0$ and $u\cdot x\in \A$ for all $x$ in the algebra of matrices $$\A=\left\{\left(\begin{array}{cc}
                   0 & a \\
                   0 & b \\
                 \end{array}\right): a,b \in \mathbb{R}  \right\}$$  considered as a subalgebra of $B=M_{2\times 2}$ under the usual matrix multiplication. If we define the map $R:\A\longrightarrow \A$ by
                $$R(\left(\begin{array}{cc}
                   0 & a \\
                   0 & b \\
                 \end{array}\right))=\left(\begin{array}{cc}
                   xy & -x^{2} \\
                   y^{2} & -xy \\
                 \end{array}\right)\left(\begin{array}{cc}
                   0 & a \\
                   0 & b \\
                 \end{array}\right)=\left(\begin{array}{cc}
                   0 & xya-x^{2}b \\
                   0 & y^{2}a-xyb \\
                 \end{array}\right),$$ then $R$ satisfies $ R(R(x)\cdot y+y\cdot R(x)) = R(x)\cdot R(y);$  for all $x, y \in \A.$
\end{exam}

\begin{exam}\label{ejemplo1}
The element $u=\left(\begin{array}{cc}
                   x & y \\
                   \frac{-x^{2}-x}{y} & -x-1 \\
                 \end{array}\right)$  , $y\neq 0$, is a skew-idempotent in the algebra of matrices under the usual matrix multiplication, that is, $u^{2}=-u$. We observe that $u\cdot x\in \A$ for all $x\in \A$.
If we define the map $R:\A\longrightarrow \A$ by $$R(\left(\begin{array}{cc}
                   0 & a \\
                   0 & b \\
                 \end{array}\right))=\left(\begin{array}{cc}
                   x & y \\
                   \frac{-x^{2}-x}{y} & -x-1 \\
                 \end{array}\right)\left(\begin{array}{cc}
                   0 & a \\
                   0 & b \\
                 \end{array}\right)=\left(\begin{array}{cc}
                   0 & xa+yb \\
                   0 & (\frac{-x^{2}-x}{y})a -(x+1)b \\
                 \end{array}\right)$$
then $R$ satisfies $R(x)\cdot R(y) = R(R(x)\cdot y + x \cdot R(y)+x\cdot y);$  for all $x, y \in \A.$
\end{exam}

\begin{defn}\label{Def RBO: CAP4}
Let $(\A,\cdot)$ be an associative algebra. A linear map $R:\A\rightarrow \A$ is called a Rota-Baxter operador of weight $(\lambda, \beta) $ on $\A$ if $R$ satisfies
\begin{equation}\label{Ec:RBO}
R(x)\cdot R(y)=R\bigl(R(x)\cdot y+x\cdot R(y)+\lambda\, x\cdot y\bigr)+\beta\, x\cdot y,  \text{ for all } x,y\in\A.
\end{equation}
\end{defn}

\begin{rem}
A Rota-Baxter operador of weight $(\lambda, \beta) $  for associative algebras allows to build examples of Dyck$^{m}$-algebras  ~\cite{Martinez2021}. The main result of this section is the construction of Rota-Baxter operador of weight $(\lambda,\beta) $ on associative algebras.
\end{rem}

\begin{prop}\label{ORB2}
Let $(\, \A, \,  \cdot \,)$ be an associative algebra and suppose that there exists $u\in \A$ such that $u^{2}=-\lambda u - \beta 1_{A}$  and $u\cdot x\in \A$ for all $x\in \A$. Then the linear map $R:\A\longrightarrow \A$ defined by $R(x)=u\cdot x$ satisfies
\begin{equation} \label{e:RBOG}
R\left(\, R(x)\cdot y+ x\cdot R(y) +\lambda x\cdot y \, \right)+\beta x \cdot y =R(x)\cdot R(y) \text{ for all } x,y \in \A.
\end{equation}
\end{prop}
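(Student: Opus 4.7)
The plan is to mimic the calculation used in the proof of Proposition \ref{PesoUno}, but keeping careful track of the coefficients $\lambda$ and $\beta$. Starting from $R(x)=u\cdot x$, I would first expand $R\bigl(R(x)\cdot y + x\cdot R(y) + \lambda\, x\cdot y\bigr)$ by linearity of $R$, which gives
$$u\cdot\bigl((u\cdot x)\cdot y\bigr) + u\cdot\bigl(x\cdot(u\cdot y)\bigr) + \lambda\, u\cdot(x\cdot y).$$
By associativity in $\A$, the first summand collapses to $u^{2}\cdot(x\cdot y)$ and the middle summand collapses to $(u\cdot x)\cdot(u\cdot y)=R(x)\cdot R(y)$.

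Next I would substitute the hypothesis $u^{2}=-\lambda u -\beta\, 1_{\A}$ into the first summand. This turns $u^{2}\cdot(x\cdot y)$ into $-\lambda\, u\cdot(x\cdot y)-\beta\,(x\cdot y)$; the $-\lambda\, u\cdot(x\cdot y)$ term cancels exactly with the $\lambda\, u\cdot(x\cdot y)$ term produced above, while $-\beta\,(x\cdot y)$ survives. After cancellation the expression becomes $R(x)\cdot R(y)-\beta\,(x\cdot y)$, and moving $\beta\, x\cdot y$ to the other side yields precisely identity \eqref{e:RBOG}.

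The main (in fact the only) obstacle is bookkeeping: one has to perform the associativity rearrangements carefully and observe that the hypothesis implicitly requires $\A$ to contain an identity element $1_{\A}$, so that $\beta\, 1_{\A}\cdot(x\cdot y)=\beta\,(x\cdot y)$ is meaningful. Once this is in place the verification is essentially a one-line computation, parallel to Proposition \ref{PesoUno} with the single substitution $u^{2}\mapsto -\lambda u-\beta\,1_{\A}$ replacing the earlier $u^{2}\mapsto -u$ (and recovering that case as $\lambda=1$, $\beta=0$). This establishes that $R$ is a Rota-Baxter operator of weight $(\lambda,\beta)$ in the sense of Definition \ref{Def RBO: CAP4}.
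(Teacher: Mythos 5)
Your proposal is correct and follows essentially the same computation as the paper: expand $R$ as left multiplication by $u$, use associativity to collect $u^{2}\cdot(x\cdot y)$, $(u\cdot x)\cdot(u\cdot y)$ and $\lambda\,u\cdot(x\cdot y)$, and let the hypothesis $u^{2}=-\lambda u-\beta\,1_{\A}$ kill everything except $R(x)\cdot R(y)-\beta\,x\cdot y$. Your remark that the hypothesis tacitly requires $\A$ to be unital (so that $\beta\,1_{\A}\cdot(x\cdot y)=\beta\,x\cdot y$ makes sense) is a fair observation also implicit in the paper's own argument.
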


\begin{proof}
Let $x,y\in \A,$ then we have
\begin{align*}
R(R(x)\cdot y+x\cdot R(y) + \lambda x\cdot y)+\beta x\cdot y  & = R((u\cdot x)\cdot y+x\cdot(u\cdot y)+ \lambda x\cdot y )+\beta x\cdot y \\
                                                              & = u\cdot((u\cdot x)\cdot y+x\cdot(u\cdot y)+\lambda x\cdot y )+\beta x\cdot y\\
                                                              & = u^{2}\cdot ( x\cdot y)+(u\cdot x)\cdot(u\cdot y)+ \lambda u\cdot (x\cdot y)+\beta x\cdot y \\
                                                              & = (u^{2}+\lambda u+\beta 1_{A} )\cdot ( x\cdot y) +(u\cdot x)\cdot(u\cdot y)\\
                                                              & = (u\cdot x)\cdot(u\cdot y)
\end{align*}
On the other hand,  $R(x)\cdot R(y)=(u\cdot x)\cdot (u\cdot y).$ Therefore, $$R(R(x)\cdot y+x\cdot R(y) + \lambda x\cdot y)+\beta x\cdot y=R(x)\cdot R(y) \text{ for all } x,y \in \A $$
\end{proof}

\begin{exam}\label{ejemplo1}
The element $u=\left(\begin{array}{cc}
                   x & y \\
                   \frac{-x^{2}-\lambda x-\beta }{y} & -x-\lambda \\
                 \end{array}\right)$, where $y\neq 0$, satisfies $u^{2}=-\lambda u- \beta 1_{A}$ and $u\cdot x\in \A$ for all $x\in \A$.
If we define the map $R:\A\longrightarrow \A$ by
\begin{align*}
R(\left(\begin{array}{cc}
                   0 & a \\
                   0 & b \\
         \end{array}\right)) & =\left(\begin{array}{cc}
                                        x & y \\
         \frac{-x^{2}-\lambda x-\beta}{y} & -x-\lambda \\
                                     \end{array}\right)\left(\begin{array}{cc}
                                                                  0 & a \\
                                                                  0 & b \\
                                                                 \end{array}\right)\\
                            &  =\left(\begin{array}{cc}
                                       0 & xa+yb \\
                                       0 & (\frac{-x^{2}-\lambda x-\beta}{y})a -(x+\lambda)b \\
                                      \end{array}\right)\\
\end{align*}
then $R$ satisfies $R(x)\cdot R(y) = R(R(x)\cdot y+x\cdot R(y) + \lambda x\cdot y)+\beta x\cdot y,$  for all $x, y \in \A.$
\end{exam}

\subsection*{Acknowledgment}
We thank to Universidad del Cauca for the support to our research group “Estructuras Algebraicas, Divulgación Matemática y Teorías Asociadas. @DiTa” under the research project with ID 5773, entitled “Aplicaciones de Estructuras Algebraicas". We also thank to the anonymous referees for their helpful comments. This work is dedicated to my daughters, especially to Clara Isabel Martinez Ceron (January 17, 2017).

\bibliographystyle{amsplain}
\bibliography{xbib}

\end{document}